  \theoremstyle{definition}
  \newtheorem{defn}{\protect\definitionname}
  \theoremstyle{plain}
  \newtheorem*{thm*}{\protect\theoremname}
  \theoremstyle{plain}
  \newtheorem{cor}{\protect\corollaryname}
\theoremstyle{plain}
\newtheorem{thm}{\protect\theoremname}
  \theoremstyle{plain}
  \newtheorem{lem}{\protect\lemmaname}
\tikzset{node distance=2cm, auto}
  \providecommand{\definitionname}{Definition}
  \providecommand{\lemmaname}{Lemma}
  \providecommand{\theoremname}{Theorem}
\providecommand{\corollaryname}{Corollary}
\providecommand{\theoremname}{Theorem}
\begin{document}

\title[Rational function in strongly convergent variables]{Non-commutative rational function in strongly convergent random variables}

\author{Sheng Yin}
\begin{abstract}
Random matrices like GUE, GOE and GSE have been studied for decades
and have been shown that they possess a lot of nice properties. In
2005, a new property of independent GUE random matrices is discovered
by Haagerup and Thorbjørnsen in their paper \cite{HT05}, it is called
strong convergence property and then more random matrices with this
property are followed (see \cite{Sch05}, \cite{CD07}, \cite{And13},
\cite{Mal12}, \cite{CM14} and \cite{BC16}). In general, the definition
can be stated for a sequence of tuples over some $\text{C}^{\ast}$-algebras.
And in this general setting, some stability property under reduced
free product can be achieved (see Skoufranis \cite{Sko15} and Pisier
\cite{Pis16}), as an analogy of the result by Camille Male \cite{Mal12}
for random matrices.

In this paper, we want to show that, for a sequence of strongly convergent
random variables, non-commutative polynomials can be extended to non-commutative
 rational functions under certain assumptions. Roughly speaking, the
strong convergence property is stable under taking the inverse. As
a direct corollary, we can conclude that for a tuple $(X_{1}^{\left(n\right)},\cdots,X_{m}^{\left(n\right)})$
of independent GUE random matrices, $r(X_{1}^{\left(n\right)},\cdots,X_{m}^{\left(n\right)})$
converges in trace and in norm to $r(s_{1},\cdots,s_{m})$ almost
surely, where $r$ is a rational function and $(s_{1},\cdots,s_{m})$
is a tuple of freely independent semi-circular elements which lies
in the domain of $r$.
\end{abstract}

\thanks{The author acknowledges the support from the ERC Advanced Grant \textquotedblleft Non-Commutative
Distributions in Free Probability\textquotedblright{} (Grant No. 339760).\\
The author wants to express his thanks to several people who really
help a lot on this work. First, the author wants to thank his supervisor,
Roland Speicher, for suggesting this interesting project and his continued
support. And the author is grateful to Guillaume Cébron for the really
fruitful discussion and some crucial ideas on this article. Finally,
the author wants to thank Tobias Mai and Felix Leid, from whom the
author learn a lot about rational functions.}
\maketitle

\section{Introduction}

In 1990's, a deep relation between random matrices and free probability
was revealed in the paper \cite{Vol91} by Voiculescu. In this paper,
Voiculescu proved that if $(X_{1}^{\left(n\right)},\cdots,X_{m}^{\left(n\right)})$
is a tuple of independent $n\times n$ normalized Hermitian Gaussian
random matrices for each $n\in\mathbb{N}$, then all the moments converge,
i.e.,
\[
\lim_{n\rightarrow\infty}\mathbb{E}\left\{ \text{tr}_{n}(p(X_{1}^{\left(n\right)},\cdots,X_{m}^{\left(n\right)}))\right\} 
\]
exists for any non-commutative polynomial $p$, where we denote the
normalized trace by $\text{tr}_{n}$. Furthermore, we can realize
the limits as a tuple of freely independent semi-circular elements
$(s_{1},\cdots,s_{m})$ in some $\text{C}^{\ast}$-probability space
$\left(\mathcal{A},\tau\right)$, namely, a unital $\text{C}^{\ast}$-algebra
with a state $\tau$. So we can write
\[
\lim_{n\rightarrow\infty}\mathbb{E}\left\{ \text{tr}_{n}(p(X_{1}^{\left(n\right)},\cdots,X_{m}^{\left(n\right)}))\right\} =\tau(p(s_{1},\cdots,s_{m}))
\]
for any polynomial $p$. This result has been extended to some other
random matrix models, for example, a tuple of Wigner matrices with
some assumptions on moments of entries \cite{Dyk93}. On the other
hand, it is also known that this convergence for random matrices can
be improved to the almost sure convergence, see Hiai, Petz \cite{HP00}
and Thorbjørnsen \cite{Tho00}.

Later, Haagerup and Thorbjørnsen showed that the convergence of random
matrices can happen be in a stronger sense, that is, convergence in
the norm. To be precise, in \cite{HT05}, they showed that for any
polynomial $p$,
\begin{equation}
\lim_{n\rightarrow\infty}\left\Vert p(X_{1}^{\left(n\right)}\left(\omega\right),\cdots,X_{m}^{\left(n\right)}\left(\omega\right))\right\Vert =\left\Vert p(s_{1},\cdots,s_{m})\right\Vert \label{eq: strong convergence}
\end{equation}
for almost every $\omega$ in the underlying probability space. Then
we say that $(X_{1}^{\left(n\right)},\cdots,$ $X_{m}^{\left(n\right)})$
\emph{strongly converges} and $(s_{1},\cdots,s_{m})$ is its \emph{strong
limit}. Following their work, Schultz \cite{Sch05} shows that GOE
and GSE also admit semi-circular elements as strong limit. Then Capitaine
and Donati-Martin \cite{CD07} and Anderson \cite{And13} generalize
the result to certain Wigner matrices. Capitaine and Donati-Martin
\cite{CD07} also extend the result to Wishart matrices with free
Poisson elements as strong limit.

Moreover, in the paper \cite{Mal12} by Male, he finds that a tuple
of random matrices from GUE can be enlarged with another tuple of
random matrices who has a strong limit under certain independence
and freeness assumptions. Later, in the paper \cite{CM14} by Collins
and Male, they show that this property also holds for Haar unitary
matrices. And then in the paper \cite{BC16} by Belinschi and Capitaine,
they proved that this property also holds for certain Wigner matrices.

Meanwhile, in recent papers by Skoufranis \cite{Sko15} and Pisier
\cite{Pis16}, it is shown that the strong convergence property is
preserved when adjoining two tuples of non-commutative random variables
which admit strong limits and are free from each other. In other words,
they proved that the reduced free product is stable with respect to
strong convergence.

Therefore, these results show that the strong convergence property
is stable under some algebraic operations, so it is natural to ask
if the strong convergence is stable under another basic algebraic
operation, namely, taking inverses. And then we can hope that the
polynomials in (\ref{eq: strong convergence}) can be replaced by
rational functions under some assumption.

On the other hand, we know that one of the main ingredients used by
Haagerup and Thorbjørnsen is the so-called linearization trick, see
\cite{HT05,HST06} for the idea and details. Inspired by the fact
that such a linearization also holds for non-commutative rational
expressions or rational functions, we can expect an affirmative answer
to our question. In this paper, we will show that this result is indeed
true but the linearization technique is not essentially necessary
when we are going from polynomials to rational functions.

In the following, we always consider the strong convergence in the
faithful tracial $\text{C}^{\ast}$-probability space setting.
\begin{defn}
Let $(\mathcal{A}^{\left(n\right)},\tau^{\left(n\right)})$, $n\in\mathbb{N}$
and $\left(\mathcal{A},\tau\right)$ be some $\text{C}^{\ast}$-probability
spaces with faithful traces. Then we assume that $x^{\left(n\right)}=(x_{1}^{\left(n\right)},\cdots,x_{m}^{\left(n\right)})$
is a tuple of elements from $\mathcal{A}^{\left(n\right)}$ for each
$n\in\mathbb{N}$, and $x=(x_{1},\cdots,x_{m})$ is a tuple of elements
in $\left(\mathcal{A},\tau\right)$ s.t. $x^{\left(n\right)}$ strongly
converges to $x$. That is, they satisfy the following:
\[
\begin{array}{c}
\lim\limits _{n\rightarrow\infty}\tau^{\left(n\right)}\left(p(x^{\left(n\right)},(x^{\left(n\right)})^{\ast})\right)=\tau(p(x,x^{\ast})),\\
\lim\limits _{n\rightarrow\infty}\left\Vert p(x^{\left(n\right)},(x^{\left(n\right)})^{\ast})\right\Vert _{\mathcal{A}^{\left(n\right)}}=\left\Vert p\left(x,x^{\ast}\right)\right\Vert _{\mathcal{A}}
\end{array}
\]
for any polynomial $p$ in $2m$ non-commuting indeterminates.
\end{defn}
In the second section, we will give a concise introduction to rational
functions and rational expressions and some of their relevant properties.
Then, in the last section, we are going to prove the main theorem:
\begin{thm*}
If $x^{\left(n\right)}=(x_{1}^{\left(n\right)},\cdots,x_{m}^{\left(n\right)})$
strongly converges to $x=(x_{1},\cdots,x_{m})$, then for any rational
expression $r$, $r(x,x^{\ast})$ is the limit of $r(x^{\left(n\right)},(x^{\left(n\right)})^{\ast})$
in trace and in norm, provided that $(x,x^{\ast})$ lies in the domain
of $r$.
\end{thm*}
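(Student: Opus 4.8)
The plan is to induct on the structure of the rational expression $r$; since a rational expression is built from the coordinate variables and scalars using $+$, $\cdot$, scalar multiplication and $(\,\cdot\,)^{-1}$, the only step with any content is inversion. To make the induction run, I would prove the following strengthened statement by structural induction: \emph{for every subexpression $s$ of $r$ (so that automatically $(x,x^{\ast})$ lies in the domain of $s$) and every auxiliary tuple $z^{(n)}$ over $\mathcal A^{(n)}$ with $(x^{(n)},(x^{(n)})^{\ast},z^{(n)})$ strongly convergent to $(x,x^{\ast},z)$, one has $(x^{(n)},(x^{(n)})^{\ast})\in\mathrm{dom}(s)$ for all large $n$, and $(x^{(n)},(x^{(n)})^{\ast},z^{(n)},s(x^{(n)},(x^{(n)})^{\ast}))$ strongly converges to $(x,x^{\ast},z,s(x,x^{\ast}))$.} The auxiliary tuple $z^{(n)}$ is precisely what lets the induction pass through the binary operations: for $s=s_{1}+s_{2}$ (or $s_{1}\cdot s_{2}$) I apply the hypothesis first to $s_{1}$ with auxiliary tuple $z^{(n)}$, then to $s_{2}$ with auxiliary tuple enlarged by $s_{1}(x^{(n)},(x^{(n)})^{\ast})$, after which $s(x^{(n)},(x^{(n)})^{\ast})$ is literally a polynomial in the strongly convergent tuple already at hand, so adjoining it (and discarding $s_{1},s_{2}$) changes nothing. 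The base cases ($s$ a variable or a scalar) and the cases $+,\cdot,\lambda\cdot$ are thus immediate, and applying the statement to $r$ with empty $z^{(n)}$ gives the theorem.

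Hence the whole difficulty sits in the step $s=s_{1}^{-1}$. The induction hypothesis for $s_{1}$ lets me assume that, for large $n$, $a^{(n)}:=s_{1}(x^{(n)},(x^{(n)})^{\ast})$ is defined and the tuple $\mathbf y^{(n)}:=(x^{(n)},(x^{(n)})^{\ast},z^{(n)},a^{(n)})$ strongly converges to $\mathbf y:=(x,x^{\ast},z,a)$, where $a:=s_{1}(x,x^{\ast})$; and since $(x,x^{\ast})\in\mathrm{dom}(s_{1}^{-1})$, the element $a$ is invertible in $\mathcal A$. The first task is to show $a^{(n)}$ is invertible in $\mathcal A^{(n)}$ for large $n$, and here the norm half of strong convergence is indispensable --- convergence in distribution alone would not rule out a vanishing fraction of the spectrum of $(a^{(n)})^{\ast}a^{(n)}$ accumulating at $0$. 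Set $b^{(n)}:=(a^{(n)})^{\ast}a^{(n)}\ge 0$ and $b:=a^{\ast}a\ge 0$; then $\sigma(b)\subseteq[\alpha,M]$ with $\alpha:=\|b^{-1}\|^{-1}>0$ and $M:=\sup_{n}\|b^{(n)}\|<\infty$ (finite because $\|b^{(n)}\|\to\|b\|$). Pick a continuous $f$ on $[0,M]$ with $f\equiv 1$ on $[0,\alpha/2]$ and $f\equiv 0$ on $[\alpha,M]$, and a polynomial $q$ with $\sup_{[0,M]}|q-f|<\tfrac14$; then $\|q(b)\|\le\tfrac14$, so $\|q(b^{(n)})\|<\tfrac12$ for large $n$ by norm convergence (note $q(b^{(n)})$ is a polynomial in $\mathbf y^{(n)}$), which forces $\sigma(b^{(n)})\subseteq[\alpha/2,M]$. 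Thus $b^{(n)}$ is invertible with $\|(b^{(n)})^{-1}\|\le 2/\alpha$, so $(b^{(n)})^{-1}(a^{(n)})^{\ast}$ is a left inverse of $a^{(n)}$; faithfulness of $\tau^{(n)}$ then promotes it to a genuine inverse, since the self-adjoint idempotent $a^{(n)}(b^{(n)})^{-1}(a^{(n)})^{\ast}$ has trace $1$ and hence equals $1$. In particular $(a^{(n)})^{-1}=(b^{(n)})^{-1}(a^{(n)})^{\ast}$ and $\sup_{n}\|(a^{(n)})^{-1}\|<\infty$.

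Finally I would check that adjoining $c^{(n)}:=(a^{(n)})^{-1}$ preserves strong convergence. The key point is that $c^{(n)}$ is a norm limit, uniformly in $n$, of polynomials in $\mathbf y^{(n)}$: since $\sigma(b^{(n)})\subseteq[\alpha/2,M]$ for all large $n$ and $\sigma(b)\subseteq[\alpha/2,M]$, approximating $t\mapsto 1/t$ uniformly on $[\alpha/2,M]$ by polynomials $q_{\varepsilon}$ gives $\|c^{(n)}-q_{\varepsilon}(b^{(n)})(a^{(n)})^{\ast}\|\le M\sup_{t\in[\alpha/2,M]}|q_{\varepsilon}(t)-1/t|\to 0$ as $\varepsilon\to 0$, uniformly in $n$, and likewise $(c^{(n)})^{\ast}$ is approximated by $a^{(n)}q_{\varepsilon}(b^{(n)})$ and $c=a^{-1}$ by the corresponding limit expression. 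Now a standard $\varepsilon/3$ argument closes the proof: given a polynomial $P$ in $\mathbf y^{(n)},c^{(n)}$ and their adjoints, substitute the polynomial approximants for every occurrence of $c^{(n)}$ and $(c^{(n)})^{\ast}$ to get a polynomial $P_{\varepsilon}$ in $\mathbf y^{(n)}$ alone; using the uniform norm bounds on the entries of $\mathbf y^{(n)}$ and on $c^{(n)}$, one gets $\|P(\mathbf y^{(n)},c^{(n)})-P_{\varepsilon}(\mathbf y^{(n)})\|\le C_{P}\,\delta(\varepsilon)$ and the same estimate for the limit objects, with $\delta(\varepsilon)\to 0$; since $\tau^{(n)}$ is a state and $P_{\varepsilon}(\mathbf y^{(n)})$ converges in trace and in norm by the induction hypothesis for $\mathbf y^{(n)}$, letting $n\to\infty$ and then $\varepsilon\to 0$ yields $\tau^{(n)}(P(\mathbf y^{(n)},c^{(n)}))\to\tau(P(\mathbf y,c))$ and $\|P(\mathbf y^{(n)},c^{(n)})\|\to\|P(\mathbf y,c)\|$. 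Discarding the auxiliary entries gives the claim for $s=s_{1}^{-1}$. I expect this invertibility step --- producing $(a^{(n)})^{-1}$ inside $\mathcal A^{(n)}$ with a norm bound uniform in $n$ --- to be the main obstacle, as it is exactly the place where neither the norm convergence nor (for two-sided invertibility) the faithfulness of the traces can be dropped.
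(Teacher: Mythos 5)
Your proposal is correct, and the core ideas match the paper's: use strong convergence plus a spectral-gap argument on $b^{(n)}=(a^{(n)})^{\ast}a^{(n)}$ to get eventual invertibility and a uniform bound on $\|(a^{(n)})^{-1}\|$, use the faithful trace to upgrade a one-sided inverse to a two-sided one, and then run an $\varepsilon/3$ argument off a polynomial approximation that is uniform in $n$. The two writeups differ in technical packaging, and the difference is worth noting. You organize the induction structurally with an auxiliary tuple $z^{(n)}$, proving the stronger assertion that adjoining the value of each subexpression preserves strong convergence of the tuple; the paper instead inducts on the nesting level $\mathfrak{R}_{k}$. More substantively, to obtain the uniform-in-$n$ polynomial approximation you work concretely: you show $\sigma(b^{(n)})\subseteq[\alpha/2,M]$ for all large $n$ by a bump-function/polynomial test, and then approximate $t\mapsto 1/t$ uniformly on $[\alpha/2,M]$ to get explicit polynomial approximants $q_{\varepsilon}(b^{(n)})(a^{(n)})^{\ast}$ for $(a^{(n)})^{-1}$. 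The paper achieves the same end more abstractly: it forms the $\ell^{\infty}$-product $\text{C}^{\ast}$-algebra $\mathcal{M}=\{(a,a^{(N+1)},\dots):\sup\|a^{(n)}\|<\infty\}$, observes that $\hat r(X,X^{\ast})$ is invertible in $\mathcal{M}$ with bounded inverse, and invokes inverse-closedness of $\text{C}^{\ast}$-subalgebras to conclude $\hat r(X,X^{\ast})$ lies in $\text{C}^{\ast}(X,X^{\ast})$, whence a single polynomial $p$ with $\|p(X,X^{\ast})-\hat r(X,X^{\ast})\|_{\mathcal{M}}<\varepsilon$ does the job. Your approach is more elementary and self-contained; the paper's is shorter once one accepts the $\text{C}^{\ast}$-subalgebra fact, and also packages the uniform norm control for all the intermediate inverses in one stroke. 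Two small cosmetic points in your writeup: the bound should read $\|c^{(n)}-q_{\varepsilon}(b^{(n)})(a^{(n)})^{\ast}\|\le\sqrt{M}\,\sup_{[\alpha/2,M]}|q_{\varepsilon}-1/t|$ rather than with $M$, and the invertibility-via-trace step uses traciality (cyclicity) as well as faithfulness; neither affects the argument.
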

The basic idea behind this is that from polynomials to rational expressions,
our only obstacle is due to taking the inverse. But we will see that
the inverse can be approximated by polynomials uniformly in all dimensions,
hence we can reduce the convergence of rational expressions to the
result on polynomials and also show that $(x^{\left(n\right)},(x^{\left(n\right)})^{\ast})$
will lie in the domain eventually.

As an example or consequence, we can apply our main result to any
random matrices which have a strong limit.
\begin{cor}
Let $X^{\left(n\right)}=(X_{1}^{\left(n\right)},\cdots,X_{m}^{\left(n\right)})$
be a tuple of independent $n\times n$ random matrices for each $n\in\mathbb{N}$,
and $x=(x,\cdots,x_{m})$ a tuple of freely independent random variables
in some faithful tracial $\text{C}^{\ast}$-probability space $\left(\mathcal{A},\tau\right)$.
Assume that $X^{\left(n\right)}$ strongly converges to $x$ almost
surely. Then for any rational expression $r$ with $(x,x^{\ast})$
in its domain, we have $(X^{\left(n\right)}\left(\omega\right),(X^{\left(n\right)}\left(\omega\right))^{\ast})$
lies in the domain of $r$ eventually and
\[
\begin{array}{c}
\lim\limits _{n\rightarrow\infty}\text{tr}_{n}(r(X^{\left(n\right)}\left(\omega\right),(X^{\left(n\right)}\left(\omega\right))^{\ast}))=\tau(r(x,x^{\ast})),\\
\lim\limits _{n\rightarrow\infty}\left\Vert r(X^{\left(n\right)}\left(\omega\right),(X^{\left(n\right)}\left(\omega\right))^{\ast})\right\Vert =\left\Vert r(x,x^{\ast})\right\Vert _{\mathcal{A}}
\end{array}
\]
for almost every $\omega$ in the underlying space.
\end{cor}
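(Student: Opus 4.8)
The plan is to deduce the Corollary directly from the main Theorem, applied $\omega$ by $\omega$. First I would fix, once and for all, a single event $\Omega_{0}$ of full probability on which $X^{\left(n\right)}(\omega)$ strongly converges to $x$ in the sense of the Definition. Here one has to be slightly careful: strong convergence is a statement about \emph{all} polynomials simultaneously, whereas the almost sure hypothesis a priori only provides, for each fixed polynomial $p$, a full-measure set on which $\text{tr}_{n}\bigl(p(X^{\left(n\right)}(\omega),(X^{\left(n\right)}(\omega))^{\ast})\bigr)$ and $\bigl\|p(X^{\left(n\right)}(\omega),(X^{\left(n\right)}(\omega))^{\ast})\bigr\|$ converge. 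The standard remedy is to intersect the countably many full-measure sets indexed by polynomials with coefficients in $\mathbb{Q}+i\mathbb{Q}$, and then to upgrade convergence on this countable dense subalgebra to convergence for every polynomial using that $\|X_{j}^{\left(n\right)}(\omega)\|$ stays bounded on $\Omega_{0}$ (which itself follows from the norm convergence of the degree-one monomials). Since the hypothesis is already phrased as \emph{almost sure} strong convergence, this reduction is built in and can simply be quoted.

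Second, I would observe that for each $n$ the pair $(M_{n}(\mathbb{C}),\text{tr}_{n})$ is a $\text{C}^{\ast}$-probability space with a faithful trace, and $(\mathcal{A},\tau)$ is one by hypothesis. Hence, for a fixed $\omega\in\Omega_{0}$, the tuples $x^{\left(n\right)}:=X^{\left(n\right)}(\omega)\in M_{n}(\mathbb{C})$ and $x\in\mathcal{A}$ satisfy precisely the hypotheses of the main Theorem. Since $(x,x^{\ast})$ is assumed to lie in the domain of the rational expression $r$, the Theorem yields that $(X^{\left(n\right)}(\omega),(X^{\left(n\right)}(\omega))^{\ast})$ lies in the domain of $r$ for all large $n$ and that
\[
\lim_{n\to\infty}\text{tr}_{n}\bigl(r(X^{\left(n\right)}(\omega),(X^{\left(n\right)}(\omega))^{\ast})\bigr)=\tau\bigl(r(x,x^{\ast})\bigr),\qquad\lim_{n\to\infty}\bigl\|r(X^{\left(n\right)}(\omega),(X^{\left(n\right)}(\omega))^{\ast})\bigr\|=\bigl\|r(x,x^{\ast})\bigr\|_{\mathcal{A}}.
\]
As $\Omega_{0}$ has full measure, this is exactly the assertion of the Corollary.

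The genuine content therefore lies entirely in the main Theorem; the passage to random matrices is a formality, and neither independence nor Gaussianity of $X^{\left(n\right)}$ enters the argument (these are used only to verify the strong convergence hypothesis for concrete ensembles such as GUE). The one step I expect to require a sentence of care is the measurability/uniformity point in the first paragraph, namely that a single almost sure event can be chosen that works for all polynomials at once; everything after that is a direct invocation of the Theorem.
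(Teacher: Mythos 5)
Your proof is correct and is essentially the argument the paper intends: the corollary is stated as a direct consequence of the main theorem, and the paper gives no separate proof precisely because the content is the pointwise application you describe, together with the observation that $(M_{n}(\mathbb{C}),\text{tr}_{n})$ is a faithful tracial $\text{C}^{\ast}$-probability space. Your side remark on measurability (that a single full-measure event suffices for all polynomials, via a countable dense subalgebra and boundedness of the generators) is a correct and sensible clarification of what ``strongly converges almost surely'' should mean, and your observation that independence of the entries plays no role beyond guaranteeing the strong-convergence hypothesis is also accurate.
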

In particular, it allows us to claim that a rational expression in
independent GUE random matrices converges almost surely in trace to
the same rational expression in free independent semi-circular elements.
In fact, such a result is not surprising at all. In the recent paper
\cite{HMS15} by Helton, Mai and Speicher, they extended the method
used for the calculation of the distribution of polynomials in free
random variables to the rational case, based on the fact that linearization
works equally well for rational expressions. From their simulation
in Section 4.7 of \cite{HMS15}, we can expect that a rational expression
in independent Gaussian random matrices should almost surely converge
in distribution to the same rational expression in free semi-circular
elements. By our theorem this is true whenever we have random matrices
which admit strong limits.

\section{Rational functions and their recursive structure}

In this section, we will give a short introduction to rational functions
and rational expressions with some highlights which are necessary
for our result in the next section.

It is well-known that for each integral domain, we can construct the
unique quotient field, namely, the smallest field in which this integral
domain can be embedded. This was generalized to certain non-commutative
rings with a property called the Ore condition. This condition can
allow us to construct the field in essentially the same way as in
the commutative case. However, to extend such embedding results to
more general cases requires new ideas.

For example, the ring of polynomial in any $m$ ($m\geqslant2$) non-commuting
indeterminates doesn't satisfy Ore condition due to its non-commutative
nature. So it is not quite obvious that a field of fractions of non-commutative
polynomials really exists and that such a field is unique even if
it exists.

As some necessary conditions for the embeddability of (non-commutative)
rings into fields are known, there was an effort to find some equivalent
conditions. See the work by Malcev \cite{Mal37}, which gives an example
of nonembeddable ring in fields with some necessary condition, and
by Klein \cite{Kle72}, which gives a conjecture for this embeddability
problem.

From 1960's, Cohn began to study the problem of embedding non-commutative
rings into fields and then he developed a matrix method to introduce
the matrix ideals, as the analogue of the ideals in commutative case.
He showed that the prime matrix ideals can be used to describe some
``kernels'' of the embeddings of rings into skew fields, as every
prime ideal in a commutative ring arises as the kernel of a homomorphism
into some commutative field. And this characterization allows us to
derive a criteria for the embeddability of rings into fields.

In the following, we always use $\mathscr{P}$ to denote the non-commutative
polynomials ring and $\mathscr{R}$ the field of fractions obtained
from $\mathscr{P}$ by Cohn's construction. We won't go into details
of this construction but we will talk about some basic properties
to show what do these rational functions look like. In fact, the only
thing about Cohn's construction we shall need is the following theorem:
\begin{thm}
\label{foundamental structure thm for rational functions}Let $r\in\mathscr{R}$
be a rational function, then there exists some $n\in\mathbb{N}$,
a matrix of polynomials $A\in M_{n}(\mathscr{P})$, a row of polynomials
$u\in M_{1,n}(\mathscr{P})$ and a column of polynomials $v\in M_{n,1}(\mathscr{P})$
s.t. $A$ is invertible in $M_{n}(\mathscr{R})$ and $r=uA^{-1}v$.
\end{thm}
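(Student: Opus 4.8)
The plan is to realize a non-commutative rational function as a linearization and then exploit the already-known structure theory. I would start from the observation that $\mathscr R$, the universal field of fractions of $\mathscr P$, is constructed so that every element is a component of the solution of a linear system over $\mathscr P$. Concretely, given $r\in\mathscr R$, by definition $r$ lies in the division closure of $\mathscr P$ inside some epic $\mathscr P$-field; so $r$ is obtained from polynomials by finitely many sums, products and inversions. I would then argue by induction on the structure of such a rational \emph{expression} representing $r$, showing at each stage that the conclusion of the theorem is preserved.

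For the inductive step the key is a Schur-complement bookkeeping argument. If $r_1=u_1A_1^{-1}v_1$ and $r_2=u_2A_2^{-1}v_2$ with $A_i\in M_{n_i}(\mathscr P)$ invertible over $M_{n_i}(\mathscr R)$, then:
\begin{itemize}
\item for $r_1+r_2$, take the block-diagonal $A=\mathrm{diag}(A_1,A_2)$, $u=(u_1,u_2)$, $v=(v_1;v_2)$;
\item for $r_1r_2$, take $A=\begin{pmatrix}A_1 & -v_1u_2\\ 0 & A_2\end{pmatrix}$ with $u=(u_1,0)$, $v=(0;v_2)$, and check via block inversion that $uA^{-1}v=u_1A_1^{-1}v_1u_2A_2^{-1}v_2$;
\item for $r_1^{-1}$ (when this makes sense in $\mathscr R$), use the identity $r_1^{-1}=(0,\dots,0,1)\widetilde A^{-1}(0;\dots;0;1)$ where $\widetilde A$ is $A_1$ bordered by the row $u_1$ and column $v_1$ (and a suitable corner entry), so that the Schur complement of the $A_1$-block is exactly $-r_1$; invertibility of $\widetilde A$ over $M_{\bullet}(\mathscr R)$ follows from invertibility of $A_1$ together with $r_1\neq 0$.
\end{itemize}
The base case, $r$ a polynomial, is trivial: take $n=1$, $A=1$, $u=1$, $v=r$. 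One must also handle scalars and the indeterminates themselves, which are again immediate.

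The main obstacle, and the place where some care is needed, is the passage from rational \emph{expressions} to the rational \emph{function} $r\in\mathscr R$ itself: one has to know that every element of $\mathscr R$ is represented by \emph{some} rational expression built from the operations above (so that the induction has something to run on), and that the invertibility hypothesis ``$A$ invertible in $M_n(\mathscr R)$'' is exactly what survives the recursion — in particular that in the inversion step the bordered matrix $\widetilde A$ is invertible over $M_\bullet(\mathscr R)$ precisely because $\mathscr R$ is a field and $r_1$ is a nonzero element of it. This is where Cohn's theory is invoked as a black box: $\mathscr R$ is the universal field of fractions of $\mathscr P$, so its elements are exactly the entries of inverses of matrices over $\mathscr P$ that become invertible in the universal field, which is a reformulation of the statement to be proved. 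I would therefore phrase the argument so that the structure theorem is read off directly from the defining universal property, with the Schur-complement identities above supplying the closure of the class of such ``linearizable'' elements under the field operations; combined with the fact that this class contains $\mathscr P$ and generates $\mathscr R$, this yields that the class is all of $\mathscr R$.
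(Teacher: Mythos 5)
The paper supplies no proof of this theorem at all: it simply points to \cite[Ch.~7]{Coh85}, where the statement is essentially a direct consequence of the way the free field $\mathscr{R}$ is constructed by matrix localization. So there is no in-paper argument to compare yours against; you are supplying a proof where the paper chose to cite.

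Your Schur-complement bookkeeping is sound. The three constructions (block-diagonal for sums, upper block-triangular with $-v_1u_2$ coupling for products, and bordering for inversion) do preserve the class of elements of the form $uA^{-1}v$ with $A$ full over $\mathscr{P}$, modulo a sign in the inversion step that you can absorb by replacing $u_1$ with $-u_1$. The base case ($n=1$, $A=1$, $v=r$) is also fine. The genuine issue is the termination of the induction. You correctly observe that your argument needs the fact that every element of $\mathscr{R}$ is reached from polynomials by finitely many field operations, i.e., $\mathscr{R}=\mathscr{R}_\infty$ in the paper's notation. But the paper \emph{derives} $\mathscr{R}=\mathscr{R}_\infty$ (its Theorem~2) from the very theorem you are proving, together with the Schur-complement lemma. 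If your argument is substituted for the citation to Cohn, the two results would be proved from each other, and the combined text would be circular.

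This can be repaired, and you half-see how, but your wording conflates two different facts. What the induction needs is not the characterization of $\mathscr{R}$-elements as entries of inverses of matrices over $\mathscr{P}$ (you are right that this would be a reformulation of the theorem itself); it is only the weaker statement that $\mathscr{R}$ is an epic $\mathscr{P}$-field, i.e., that the division closure of $\mathscr{P}$ inside $\mathscr{R}$ is all of $\mathscr{R}$. That is part of the definition of the universal field of fractions and is independent of the realization statement. If you explicitly invoke epicity at that point — rather than the ``entries of inverses of matrices'' description — your induction has a legitimate starting point and the circularity disappears. As written, however, the proposal does not distinguish these, and so does not stand on its own as a replacement for the paper's reference.
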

For a more general statement and the proof, see \cite[Ch 7]{Coh85}.

In fact, to represent a rational function in terms of matrices of
polynomials appears not only in the context of ring theory, but also
in the system and control theory, called ``realization''. Moreover,
such a realization is usually required to be in a linear form, i.e.,
all the entries in the matrices in the above theorem are at most of
degree $1$ as polynomials. So this technique is also called linearization.
But we won't talk any more about this realization or linearization
technique in this paper, though it has a variety of implications in
different areas.

Now we want to use this theorem to show that the field of rational
functions has a recursive structure. That is, all the rational functions
can be obtained by taking finitely many algebraic operations (addition,
multiplication, inversion) from polynomials. This exactly meets what
we would expect for rational functions intuitively but may not be
obvious from the theory of Cohn.

Denote $\mathscr{R}_{0}=\mathscr{P}$, and by $\mathscr{R}_{1}$ we
denote the subring of $\mathscr{R}$ generated by $\mathscr{R}_{0}\cup\mathscr{R}_{0}^{-1}$,
where $\mathscr{R}_{0}^{-1}$ is the set of inverses of all nonzero
polynomials. Now, suppose that we have constructed the subring $\mathscr{R}_{n}\subseteq\mathscr{R}$
for some $n\in\mathbb{N}$, then we let $\mathscr{R}_{n+1}$ be the
subring of $\mathscr{R}$ generated by $\mathscr{R}_{n}\cup\mathscr{R}_{n}^{-1}$,
where $\mathscr{R}_{n}^{-1}$ is the set of inverses of all nonzero
rational functions in $\mathscr{R}_{n}$. So we have a increasing
sequence of subrings $\{\mathscr{R}_{n}\}_{n\geqslant1}$ in $\mathscr{R}$.
Then we set
\[
\mathscr{R}_{\infty}=\bigcup_{n=1}^{\infty}\mathscr{R}_{n}.
\]
We expect (and will show below) that have $\mathscr{R}_{\infty}=\mathscr{R}$.
The following argument is based on a similar idea for proving that
$\mathscr{R}$ is really a ``free'' field, i.e., every 0 identity
comes from algebraic manipulations. For a reference, see \cite{CR99}
and also \cite{Coh06}.

First, for reader's convenience, we give a short proof for a well-known
lemma about Schur complements for matrices in a unital algebra setting.
\begin{lem}
Suppose that $\mathcal{A}$ is a complex and unital algebra. Let $k,l\in\mathbb{N}$,
$A\in M_{k}\left(\mathcal{A}\right)$, $B\in M_{k\times l}\left(\mathcal{A}\right)$,
$C\in M_{l\times k}\left(\mathcal{A}\right)$ and $D\in M_{l}\left(\mathcal{A}\right)$
s.t. $D$ is invertible. Then the matrix
\[
\begin{pmatrix}A & B\\
C & D
\end{pmatrix}
\]
 is invertible in $M_{k+l}\left(\mathcal{A}\right)$ iff the Schur
complement $A-BD^{-1}C$ is invertible in $M_{k}\left(\mathcal{A}\right)$.
In this case, we will have
\begin{equation}
\begin{pmatrix}A & B\\
C & D
\end{pmatrix}^{-1}=\begin{pmatrix}1 & 0\\
-D^{-1}C & 1
\end{pmatrix}\begin{pmatrix}(A-BD^{-1}C)^{-1} & 0\\
0 & D^{-1}
\end{pmatrix}\begin{pmatrix}1 & -BD^{-1}\\
0 & 1
\end{pmatrix}.\label{inverse of block matrix}
\end{equation}
\end{lem}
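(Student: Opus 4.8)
The plan is to reduce both assertions to a single block factorization over $\mathcal{A}$. First I would record the identity
\[
\begin{pmatrix}A & B\\ C & D\end{pmatrix}=\begin{pmatrix}1 & BD^{-1}\\ 0 & 1\end{pmatrix}\begin{pmatrix}A-BD^{-1}C & 0\\ 0 & D\end{pmatrix}\begin{pmatrix}1 & 0\\ D^{-1}C & 1\end{pmatrix},
\]
and call the three factors on the right $L$, $M$ and $R$. This identity is verified by a direct multiplication of block matrices; only the relations $D^{-1}D=DD^{-1}=1$ are used, and in particular no commutativity in $\mathcal{A}$ is needed.

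Next I would observe that $L$ and $R$ are invertible in $M_{k+l}(\mathcal{A})$, with
\[
L^{-1}=\begin{pmatrix}1 & -BD^{-1}\\ 0 & 1\end{pmatrix},\qquad R^{-1}=\begin{pmatrix}1 & 0\\ -D^{-1}C & 1\end{pmatrix},
\]
again by a direct multiplication. Since $L$ and $R$ are (two-sided) invertible, the product $LMR$ is invertible if and only if $M$ is. Then I would argue that the block-diagonal matrix $M$, with $D$ already invertible, is invertible if and only if the Schur complement $A-BD^{-1}C$ is: writing a candidate two-sided inverse of $M$ in block form and expanding the equations $M(\cdot)=1$ and $(\cdot)M=1$ forces the off-diagonal blocks to vanish and identifies the upper-left block as a two-sided inverse of $A-BD^{-1}C$; conversely, if the Schur complement is invertible then $M^{-1}=\mathrm{diag}\bigl((A-BD^{-1}C)^{-1},D^{-1}\bigr)$. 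Combining these two steps gives the stated equivalence.

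Finally, when $A-BD^{-1}C$ is invertible I would obtain the inverse of the original matrix by reversing the order of the factors, namely as $R^{-1}M^{-1}L^{-1}$; substituting the three expressions found above yields precisely formula (\ref{inverse of block matrix}). I do not expect any genuine obstacle here. The only point demanding a little care is that ``invertible'' must mean two-sided invertible throughout, since $M_{k+l}(\mathcal{A})$ need not be commutative, so the reductions ``$LMR$ invertible $\iff$ $M$ invertible'' and ``$M$ invertible $\iff$ $A-BD^{-1}C$ invertible'' have to be carried out with two-sided inverses, and all block products must be written in the correct order.
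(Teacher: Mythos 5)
Your proof is correct and follows essentially the same route as the paper: the same $LMR$ factorization of the block matrix into unitriangular factors times the block diagonal $\mathrm{diag}(A-BD^{-1}C,\,D)$, the same observation that the unitriangular factors are invertible, and the same inversion of the product in reverse order to obtain formula~(\ref{inverse of block matrix}). You merely spell out in more detail the step that $\mathrm{diag}(A-BD^{-1}C,\,D)$ is invertible iff $A-BD^{-1}C$ is, which the paper dismisses as immediate; this is a valid elaboration, not a different argument.
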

\begin{proof}
It's easy to check that
\[
\begin{pmatrix}A & B\\
C & D
\end{pmatrix}=\begin{pmatrix}1 & BD^{-1}\\
0 & 1
\end{pmatrix}\begin{pmatrix}A-BD^{-1}C & 0\\
0 & D
\end{pmatrix}\begin{pmatrix}1 & 0\\
D^{-1}C & 1
\end{pmatrix}
\]
holds whenever $D$ is invertible. Since the matrices
\[
\begin{pmatrix}1 & BD^{-1}\\
0 & 1
\end{pmatrix}\text{ and }\begin{pmatrix}1 & 0\\
D^{-1}C & 1
\end{pmatrix}
\]
are clearly invertible in $M_{k+l}\left(\mathcal{A}\right)$, the
equivalence of invertibilities of $\begin{pmatrix}A & B\\
C & D
\end{pmatrix}$ and $A-BD^{-1}C$ follows immediately. And (\ref{inverse of block matrix})
follows from a simple calculation
\begin{eqnarray*}
\begin{pmatrix}A & B\\
C & D
\end{pmatrix}^{-1} & = & \begin{pmatrix}1 & 0\\
D^{-1}C & 1
\end{pmatrix}^{-1}\begin{pmatrix}A-BD^{-1}C & 0\\
0 & D
\end{pmatrix}^{-1}\begin{pmatrix}1 & BD^{-1}\\
0 & 1
\end{pmatrix}^{-1}\\
 & = & \begin{pmatrix}1 & 0\\
-D^{-1}C & 1
\end{pmatrix}\begin{pmatrix}(A-BD^{-1}C)^{-1} & 0\\
0 & D^{-1}
\end{pmatrix}\begin{pmatrix}1 & -BD^{-1}\\
0 & 1
\end{pmatrix}.
\end{eqnarray*}
\end{proof}
With the help of the above lemma, we can show the following lemma,
which is crucial for our statement on $\mathscr{R}_{\infty}=\mathscr{R}$.
\begin{lem}
If an $n$-by-$n$ matrix $A\in M_{n}\left(\mathscr{R}_{\infty}\right)$
is invertible in $M_{n}\left(\mathscr{R}\right)$, then $A^{-1}\in M_{n}\left(\mathscr{R}_{\infty}\right)$.
\end{lem}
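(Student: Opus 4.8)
The plan is to induct on the level $k$ of the filtration, showing that if $A \in M_n(\mathscr{R}_k)$ is invertible in $M_n(\mathscr{R})$ then $A^{-1} \in M_n(\mathscr{R}_k)$ (which is stronger, and immediately gives the statement since any $A \in M_n(\mathscr{R}_\infty)$ has all its finitely many entries in some common $\mathscr{R}_k$). The base case $k=0$, where $A$ is a matrix of polynomials invertible over $\mathscr{R}$, is exactly the kind of statement that Theorem~\ref{foundamental structure thm for rational functions} is built to handle, but it still needs an argument: a priori the entries of $A^{-1}$ are rational functions, and we must see they lie in $\mathscr{R}_1$, i.e. can be built from polynomials by finitely many additions, multiplications and inversions. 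I would prove this together with the inductive step by a single device.

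The device is the Schur complement lemma just proved, used as a dimension-reduction. Given $A \in M_n(\mathscr{R}_k)$ invertible over $\mathscr{R}$, I want to find an entry or a suitable small sub-block of $A$ that is invertible in the appropriate matrix algebra over $\mathscr{R}_{k+1}$ (or over $\mathscr{R}_k$, in the refined statement), peel it off via formula~(\ref{inverse of block matrix}), and recurse on the smaller Schur complement. The subtlety is that an invertible matrix over a (skew) field need not have any invertible entry — the standard obstruction, e.g. $\left(\begin{smallmatrix}0&1\\1&0\end{smallmatrix}\right)$. So first I would permute rows (multiply $A$ on the left by a permutation matrix, which is invertible over $\mathscr{P} \subseteq \mathscr{R}_k$ and does not change the subring membership of $A^{-1}$ up to a permutation of rows) so that the $(1,1)$ entry, call it $a = A_{11}$, is nonzero; this is possible because some entry in the first column is nonzero, as $A$ is invertible. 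Now $a \in \mathscr{R}_k$ is nonzero, hence invertible in $\mathscr{R}_{k+1}$ by the very definition of $\mathscr{R}_{k+1}$, and in fact $a^{-1} \in \mathscr{R}_{k+1}$. With $D = (a)$ the $1\times 1$ block, the lemma tells us $A$ is invertible over $\mathscr{R}$ iff the $(n-1)\times(n-1)$ Schur complement $A' = A'' - b\,a^{-1}c$ is invertible over $\mathscr{R}$, where $A''$, $b$, $c$ are the remaining blocks of $A$; and all entries of $A'$ lie in $\mathscr{R}_{k+1}$ since they are obtained from entries of $A \in \mathscr{R}_k$ by one inversion, some multiplications and a subtraction. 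By the inductive hypothesis on the matrix size $n$, $(A')^{-1} \in M_{n-1}(\mathscr{R}_{k+1})$, and then formula~(\ref{inverse of block matrix}) exhibits $A^{-1}$ as a product of matrices all of whose entries lie in $\mathscr{R}_{k+1}$, so $A^{-1} \in M_n(\mathscr{R}_{k+1})$. Undoing the row permutation only permutes the columns of $A^{-1}$, so we still land in $M_n(\mathscr{R}_{k+1})$, and since $k$ was arbitrary and $\bigcup_k \mathscr{R}_k = \mathscr{R}_\infty$, we are done.

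The main obstacle — and the place where a little care is needed — is precisely the step ``some entry in the first column is nonzero, so after a row permutation $A_{11} \neq 0$''. This uses that $A$ is invertible over the field $\mathscr{R}$: if an entire row or column of $A$ were zero then $A$ could not be invertible, so in particular the first column has a nonzero entry. One must also be careful that the induction is genuinely on the matrix dimension $n$ (with the filtration level possibly climbing by one at each deletion — but it only needs to climb finitely often, $n$ times, and $\mathscr{R}_\infty$ absorbs all of that), and that all the block manipulations take place inside a fixed $\mathscr{R}_k$ at each stage so that ``entries in $\mathscr{R}_{k+1}$'' is a legitimate closure statement. I do not expect any genuinely deep difficulty: once the Schur complement reduction is set up, everything is bookkeeping, and the only conceptual input is Theorem~\ref{foundamental structure thm for rational functions} being invoked implicitly in the guarantee that $\mathscr{R}$ really is the field of fractions so that ``$A$ invertible over $\mathscr{R}$'' behaves like invertibility over a field.
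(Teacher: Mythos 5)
Your proof takes essentially the same approach as the paper: reduce the matrix size by a Schur complement step using formula~(\ref{inverse of block matrix}), after a row permutation to bring a nonzero pivot into the corner, and conclude by induction on the matrix dimension. The only difference is that you try to track the filtration level $k$ explicitly, whereas the paper works with $\mathscr{R}_{\infty}$ entries throughout and so never needs to.

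This is where your write-up has a small internal inconsistency, though not a fatal one. The ``stronger'' statement you announce up front --- that $A \in M_n(\mathscr{R}_k)$ invertible over $\mathscr{R}$ implies $A^{-1} \in M_n(\mathscr{R}_k)$ with the \emph{same} $k$ --- is false, already at $n=1$: take $A=(x)\in M_1(\mathscr{R}_0)$, whose inverse $(x^{-1})$ lies in $M_1(\mathscr{R}_1)$ but not $M_1(\mathscr{R}_0)$. Correspondingly, the line ``by the inductive hypothesis on the matrix size $n$, $(A')^{-1}\in M_{n-1}(\mathscr{R}_{k+1})$'' does not follow from any correct inductive hypothesis without the level rising by another $n-2$ in the recursion. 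You actually catch this in your closing paragraph (``the filtration level possibly climbing by one at each deletion ... $\mathscr{R}_{\infty}$ absorbs all of that''), so the argument you intend is sound, but as written the opening claim and the closing caveat contradict each other. The clean repair is exactly what the paper does: phrase the induction uniformly over $M_n(\mathscr{R}_{\infty})$, observe at each step that the pivot lies in some $\mathscr{R}_k$ so its inverse lies in $\mathscr{R}_{k+1}\subseteq\mathscr{R}_{\infty}$, and let $\mathscr{R}_{\infty}$ swallow all the bookkeeping.
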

\begin{proof}
We are going to prove this by induction on the size of matrices. First,
let $r\in M_{1}\left(\mathscr{R}_{\infty}\right)$, then we can view
it as a rational function in $\mathscr{R}_{\infty}$, which implies
that there is some $k\in\mathbb{N}$ s.t. $r\in\mathscr{R}_{k}$.
Thus, $r$ is invertible in $M_{1}\left(\mathscr{R}\right)=\mathscr{R}$
means that $r\neq0$, and so we have $r^{-1}\in\mathscr{R}_{k}^{-1}\subseteq\mathscr{R}_{k+1}\subseteq\mathscr{R}_{\infty}$.

Now assume that the claim is true for matrices of size $n-1$. Let
$A\in M_{n}\left(\mathscr{R}_{\infty}\right)$ be invertible in $M_{n}\left(\mathscr{R}\right)$,
then, WLOG, we can write
\[
A=\begin{pmatrix}B & u\\
v & p
\end{pmatrix}
\]
with $p\neq0$, because we can multiply by a permutation matrix to
achieve this. Hence, we see that $B-up^{-1}v\in M_{n-1}\left(\mathscr{R}_{\infty}\right)$
is invertible in $M_{n-1}\left(\mathscr{R}\right)$ by the previous
lemma, then it follows that $\left(B-up^{-1}v\right)^{-1}\in M_{n-1}\left(\mathscr{R}_{\infty}\right)$
by the induction. Since
\[
A^{-1}=\begin{pmatrix}I_{n-1} & 0\\
-p^{-1}u & 1
\end{pmatrix}\begin{pmatrix}(B-up^{-1}v)^{-1} & 0\\
0 & p^{-1}
\end{pmatrix}\begin{pmatrix}I_{n-1} & -vp^{-1}\\
0 & 1
\end{pmatrix}
\]
by (\ref{inverse of block matrix}), we can see clearly that $A^{-1}\in M_{n}\left(\mathscr{R}_{\infty}\right)$
since each matrix in the right hand side lies in $M_{n}\left(\mathscr{R}_{\infty}\right)$.
This completes the proof.
\end{proof}
\begin{thm}
We have
\[
\mathscr{R}=\mathscr{R}_{\infty}.
\]
\end{thm}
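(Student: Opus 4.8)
The plan is to show the two inclusions $\mathscr{R}_{\infty} \subseteq \mathscr{R}$ and $\mathscr{R} \subseteq \mathscr{R}_{\infty}$. The first inclusion is essentially by construction: each $\mathscr{R}_n$ was defined as a subring of $\mathscr{R}$, so their union $\mathscr{R}_{\infty}$ sits inside $\mathscr{R}$ as well. All the content is in the reverse inclusion, for which I would invoke Theorem~\ref{foundamental structure thm for rational functions}.

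First I would take an arbitrary $r \in \mathscr{R}$ and apply Theorem~\ref{foundamental structure thm for rational functions} to obtain $n \in \mathbb{N}$, a matrix $A \in M_n(\mathscr{P})$ invertible in $M_n(\mathscr{R})$, a row $u \in M_{1,n}(\mathscr{P})$ and a column $v \in M_{n,1}(\mathscr{P})$ with $r = u A^{-1} v$. Since $\mathscr{P} = \mathscr{R}_0 \subseteq \mathscr{R}_{\infty}$, every entry of $A$, $u$ and $v$ lies in $\mathscr{R}_{\infty}$; in particular $A \in M_n(\mathscr{R}_{\infty})$. Now I would apply the preceding lemma (the one stating that a matrix in $M_n(\mathscr{R}_{\infty})$ that is invertible over $\mathscr{R}$ has its inverse in $M_n(\mathscr{R}_{\infty})$) to conclude $A^{-1} \in M_n(\mathscr{R}_{\infty})$. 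Finally, since $\mathscr{R}_{\infty}$ is a ring (it is an increasing union of subrings $\mathscr{R}_n$, hence closed under addition and multiplication), the product $u A^{-1} v$ — a sum of products of entries of $u$, $A^{-1}$ and $v$, all of which lie in $\mathscr{R}_{\infty}$ — is again an element of $\mathscr{R}_{\infty}$. Thus $r \in \mathscr{R}_{\infty}$, and as $r$ was arbitrary, $\mathscr{R} \subseteq \mathscr{R}_{\infty}$.

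There is really no serious obstacle here once the two previous lemmas are in hand; the one point that deserves a word is the observation that $\mathscr{R}_{\infty}$ is closed under multiplication. This is because the $\mathscr{R}_n$ form an increasing chain, so any two elements of $\mathscr{R}_{\infty}$ already lie in a common $\mathscr{R}_n$, where the ring structure gives their sum and product. Combining the two inclusions yields $\mathscr{R} = \mathscr{R}_{\infty}$, which is exactly the recursive description of the free field we were after: every rational function is built from polynomials by finitely many additions, multiplications and inversions.
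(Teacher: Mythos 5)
Your proof is correct and follows exactly the paper's own argument: apply Theorem~\ref{foundamental structure thm for rational functions} to write $r=uA^{-1}v$ with polynomial data, then use the preceding lemma to conclude $A^{-1}\in M_n(\mathscr{R}_\infty)$, and hence $r\in\mathscr{R}_\infty$. The only difference is that you spell out the trivial inclusion $\mathscr{R}_\infty\subseteq\mathscr{R}$ and the closure of $\mathscr{R}_\infty$ under ring operations, which the paper leaves implicit.
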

\begin{proof}
Let $r\in\mathscr{R}$ be a rational function, then, by Theorem \ref{foundamental structure thm for rational functions},
there exists a matrix of polynomials $A\in M_{n}\left(\mathscr{P}\right)$,
a row $u\in M_{1,n}\left(\mathscr{P}\right)$ and a column $v\in M_{n,1}\left(\mathscr{P}\right)$
for some $n\in\mathbb{N}$ s.t. $A$ is invertible in $M_{n}\left(\mathscr{R}\right)$
and $r=uA^{-1}v$. By the previous lemma, and since $\mathscr{P}\subseteq\mathscr{R}_{\infty}$,
we see that $A^{-1}\in M_{n}\left(\mathscr{R}_{\infty}\right)$ and
thus $r\in\mathscr{R}_{\infty}$.
\end{proof}
It is well-known that in the commutative case, every rational function
can be written in a form like $pq^{-1}$, where $p$ and $q$ are
polynomials. This means that we will have $\mathscr{R}=\mathscr{R}_{1}=\mathscr{R}_{n}$
for all $n\geqslant1$. But it is not true any more for non-commutative
rational functions due to its noncommutativity. For example, we can't
write $xy^{-1}x\in\mathscr{R}_{1}$ as the product $pq^{-1}$ with
two polynomials $p,q$. And the rational function $\left(x^{-1}+y^{-1}+z^{-1}\right)^{-1}$
lies in $\mathscr{R}_{2}$ but not in $\mathscr{R}_{1}$.

On the other hand, we should note that such a representation is not
unique. For a simple example,
\[
r\left(x,y\right)=\left(xy\right)^{-1}=y^{-1}x^{-1}\in\mathscr{R}_{1},
\]
we can see that we can use one polynomial $xy$ or two polynomials
$x$, $y$ to represent the same rational function $r$. This causes
a problem when we try to evaluate a rational function and to define
its domain over some algebra. For example, let us consider the evaluation
of the above rational function $r\left(x,y\right)$ on some unital
algebra $\mathcal{A}$. From the first representation $\left(xy\right)^{-1}$,
it gives a domain
\[
D_{1}=\left\{ \left(a,b\right)\in\mathcal{A}^{2}|ab\text{ is invertible in }\mathcal{A}\right\} ,
\]
on which the function $r$ is well-defined. But from the second one
$y^{-1}x^{-1}$, its domain is
\[
D_{2}=\left\{ \left(a,b\right)\in\mathcal{A}^{2}|a,b\text{ are invertible in }\mathcal{A}\right\} .
\]
Clearly $D_{2}\subseteq D_{1}$, but in general, we won't have $D_{1}\subseteq D_{2}$.
For example, if $\mathcal{A}=B\left(H\right)$ for some infinitely
dimensional Hilbert space, and $l$ is the one-sided left-shift operator,
then $l^{\ast}$ is the right-shift operator and we have the property
$l\cdot l^{\ast}=1$ but $l^{\ast}\cdot l\neq1$. Therefore, we see
that $\left(l,l^{\ast}\right)\notin D_{2}$ since both of them are
not invertible but $\left(l,l^{\ast}\right)\in D_{1}$.

Furthermore, if we want to evaluate a rational function $r$ which
has two different representations $\hat{r}_{1}$ and $\hat{r}_{2}$,
then we need to guarantee that for each element in the intersection
of the domains of $\hat{r}_{1}$ and $\hat{r}_{2}$, their evaluations
will agree. But this is also not true in general. To see this, we
can consider the following example,
\[
r\left(x,y\right)=1=y\left(xy\right)^{-1}x.
\]
Let $l$, $l^{*}$ be the left-shift and right-shift operators again,
then we see that $l^{\ast}(ll^{\ast})^{-1}l=l^{\ast}l\neq1$.

Thanks to the insights of Cohn, we can avoid such a problem by considering
an algebra $\mathcal{A}$ which is stably finite, i.e., for each $n\in M_{n}\left(\mathcal{A}\right)$,
any $A,B\in M_{n}\left(\mathcal{A}\right)$, we have that $AB=1$
implies $BA=1$. In fact, an algebra $\mathcal{A}$ is stable finite
if and only if all such representations of the zero function on the
algebra give zero evaluation. See Theorem 7.8.3 in the book \cite{Coh06}.
It is clear that $M_{n}\left(\mathbb{C}\right)$ is stably finite
for any $n\in\mathbb{N}$, so we can plug in our random matrices when
they are in the domain. And fortunately, any $\text{C}^{\ast}$-probability
space with a faithful trace is also stably finite (for a proof of
this fact, see Lemma 2.2 in \cite{HMS15}). So in this case, the evaluation
is well-defined if the elements are in the domain of the considered
representation.

In some sense, the above representations of rational functions are
the ``irreducible'' ones. That is, for a rational function $r\in\mathscr{R}$,
we can always take more times of algebraic operations than we really
need. For example, we can write
\[
\mathscr{R}_{0}\ni1=x^{-1}x=\left(x+yy^{-1}\right)^{-1}\left(x+zz^{-1}\right)=\cdots
\]
In order to obtain the maximal domain of a rational function, it's
much safer that we take the union of all the domains given by any
possible representations that can be ``reduced'' to the same rational
function.

Now we want to give a formal definition of such representations or
expressions, and show that they have a similar recursive structure
as rational functions $\mathscr{R}$. Then we can define the domains
of these rational expressions and hence the domains of rational functions.

Denoting $\mathfrak{R}_{0}=\mathscr{P}$, we define $\mathfrak{R}_{1}$
to the free complex algebra with generating set $\mathfrak{R}_{0}\cup\mathfrak{R}_{0}^{-1}$,
i.e., we view the polynomials and their inverses as letters instead
of rational functions in $\mathscr{R}$. In particular, $0^{-1}$
is also a valid non-empty word though it is meaningless when we try
to consider it as rational functions. Then we build the free algebra
with all words over this alphabet $\mathfrak{R}_{0}\cup\mathfrak{R}_{0}^{-1}$.
As a remark, we should note that for a polynomial, says $x$, the
words $x^{-1}\cdot x$, $x\cdot x^{-1}$ and $1$ are different words
in $\mathfrak{R}_{1}$, and $0$ is a non-empty word in $\mathfrak{R}_{1}$.

Therefore, we can construct a sequence of free algebra $\mathfrak{R}_{n}$,
$n\in\mathbb{N}$ recursively, that is, each $\mathfrak{R}_{n}$ is
just the free algebra generated by the alphabet $\mathfrak{R}_{n-1}\cup\mathfrak{R}_{n-1}^{-1}$,
$n\geqslant1$. It is clear that we have a natural inclusion map $i_{n}:\mathfrak{R}_{n}\rightarrow\mathfrak{R}_{n+1}$,
$n\in\mathbb{N}$ and hence we have their direct limit, denoted by
$\mathfrak{R}_{\infty}$.

Now we define $\phi_{0}:\mathfrak{R}_{0}\rightarrow\mathscr{R}_{0}$
as the identity map on polynomials. Then we can define a homomorphism
$\phi_{1}:\mathfrak{R}_{1}\rightarrow\mathscr{R}_{1}$ through extending
the map
\[
\phi_{1}\left(\alpha\right)=\begin{cases}
\phi_{0}\left(\alpha\right) & \alpha\text{ is a letter in the set }\mathfrak{R}_{0},\\
\left(\phi_{0}\left(\beta\right)\right)^{-1} & \alpha=\beta^{-1}\text{ is a letter in the set }\mathfrak{R}_{0}^{-1},\ \beta\not=0,\\
0 & \alpha=0^{-1}.
\end{cases}
\]
Therefore, we can define a sequence of homomorphisms $\left\{ \phi_{n}\right\} _{n\in\mathbb{N}}$
recursively, that is, by extending the map
\[
\phi_{n}\left(\alpha\right)=\begin{cases}
\phi_{n-1}\left(\alpha\right) & \alpha\text{ is a letter in }\mathfrak{R}_{n-1},\\
\left(\phi_{n-1}\left(\beta\right)\right)^{-1} & \alpha=\beta^{-1}\text{ is a letter in }\mathfrak{R}_{n-1}^{-1},\ \beta\not\in\ker\phi_{n-1},\\
0 & \alpha=\beta^{-1}\text{ is a letter in }\mathfrak{R}_{n-1}^{-1},\ \beta\in\ker\phi_{n-1}.
\end{cases}
\]
Thus, we see that there is a homomorphism $\Phi:\mathfrak{R}_{\infty}\rightarrow\mathscr{R}_{\infty}=\mathscr{R}$.
In other words, we have commutative diagrams as following: for every
$n\in\mathbb{N}$,

\begin{center}
\begin{tikzpicture}
  \node (P) {$\mathscr{P}$};
  \node (R1) [right of = P] {$\mathfrak{R}_{1}$};
  \node (R2) [right of = R1] {$\mathfrak{R}_{2}$};
  \node (Rdots) [right of = R2] {$\cdots$};
  \node (Rn) [right of = Rdots] {$\mathfrak{R}_{n}$};
  \node (R8) [right of = Rn] {$\mathfrak{R}_{\infty}$};
  \node (p) [below of = P] {$\mathscr{P}$};
  \node (r1) [right of = p] {$\mathscr{R}_{1}$};
  \node (r2) [right of = r1] {$\mathscr{R}_{2}$};
  \node (rdots) [right of = r2] {$\cdots$};
  \node (rn) [right of = rdots] {$\mathscr{R}_{n}$};
  \node (r8) [right of = rn] {$\mathscr{R}_{\infty}$};
  \draw[->] (P) to node []{} (R1);
  \draw[->] (R1) to node []{} (R2);
  \draw[->] (R2) to node []{} (Rdots);
  \draw[->] (Rdots) to node []{} (Rn);
  \draw[->] (Rn) to node []{} (R8);
  \draw[->] (p) to node []{} (r1);
  \draw[->] (r1) to node []{} (r2);
  \draw[->] (r2) to node []{} (rdots);
  \draw[->] (rdots) to node []{} (rn);
  \draw[->] (rn) to node []{} (r8);
  \draw[->] (P) to node [right]{$\phi_{0}$} (p);
  \draw[->] (R1) to node [right]{$\phi_{1}$} (r1);
  \draw[->] (R2) to node [right]{$\phi_{2}$} (r2);
  \draw[->] (Rn) to node [right]{$\phi_{n}$} (rn);
  \draw[->] (R8) to node [right]{$\Phi$} (r8);
\end{tikzpicture}
\par\end{center}

It is clear that $\Phi$ is surjective, so for a rational function
$r$ in $\mathscr{R}$, each element in its preimage $\Phi^{-1}\left(r\right)$
is a representation of $r$, and we call it a \emph{rational expression}
of $r$. As a word over some alphabet, the evaluation of a rational
expression at a tuple of elements in an algebra is clear, and thus
the \emph{domain} of a rational expression is the set of any tuple
that makes the evaluation possible. As mentioned previously, if an
algebra $\mathcal{A}$ is stably finite, then the evaluation of a
rational expression depends only on the corresponding rational function.
We define the \emph{domain} of a rational function $r$ as the union
of the domains of all possible rational expressions in $\Phi^{-1}\left(r\right)$. 

As a remark, we can see that the elements in $\ker\Phi$ arise as
the representations which can be ``reduced'' to $0$, such as
\[
y^{-1}\left(x^{-1}+y^{-1}\right)^{-1}x^{-1}-\left(x+y\right)^{-1},
\]
or which can be ``reduced'' to $0^{-1}$, like
\[
\left[1-y\left(xy\right)^{-1}x\right]^{-1},
\]
which make no sense when we evaluate them on algebras, and thus always
have the empty domain.

Now all the ingredients for rational functions are ready. But before
we move on to the convergence problem, we make just two more remarks
about rational expressions and functions, which may be helpful for
better understanding on this subject.

First, the rational expressions also give us another way to rediscover
the rational functions. Let $\mathcal{A}=\bigsqcup\limits _{n=1}^{\infty}\left[\left(M_{n}\left(\mathbb{C}\right)\right)^{r}\right]$
be the algebra consisting of all $r$-tuples of matrices of all sizes.
Then we can define that two rational expressions $\hat{r}_{1}$ and
$\hat{r}_{2}$ are ``equivalent'' if $\hat{r}_{1}\left(a\right)=\hat{r}_{2}\left(a\right)$
for each $a\in\text{dom}\left(\hat{r}_{1}\right)\cap\text{dom}\left(\hat{r}_{2}\right)\subseteq\mathcal{A}$.
Then it can be shown that these equivalence classes of rational expressions
coincide with the rational functions (for details, see \cite{KV12}).

At last, we want to emphasize again that these rational functions
or expressions are not just abstract objects from non-commutative
ring theory, but also appear in system and control theory, from the
theory of finite automata and formal languages to robust control and
linear matrix inequalities. In fact, they already use rational expressions
to consider related problems for about 50 years there. For example,
in the ``regular'' case, i.e., the rational expression with non
zero value at point $0$, the language of power series is applied
and first appeared in the theory of formal languages and finite automata
quite long ago, see Kleene \cite{Kle56}, Schützenberger \cite{Sch61,Sch63}
and Fliess \cite{Fli74a,Fli74b}. For a good exposition on this, see
the monograph by Berstel and Reutenauer \cite{BR84}.

\section{Convergence of the norm and trace for rational expressions}

Now we know enough to move on to our strong convergence problem of
rational functions. Equivalently, we will just consider rational expressions
due to our discussion in the last section. First of all, for a given
rational expression $r$ and a given tuple $x=(x_{1},\cdots,x_{m})$
in some $\text{C}^{\ast}$-probability space $(\mathcal{A},\tau)$
with faithful trace $\tau$, an assumption that $\left(x,x^{\ast}\right)$
lies in the domain of $r$ is reasonable. However, if there is a sequence
of tuples $x^{\left(n\right)}=(x_{1}^{\left(n\right)},\cdots,x_{m}^{\left(n\right)})$
from faithful tracial $\text{C}^{\ast}$-probability spaces $(\mathcal{A}^{\left(n\right)},\tau^{\left(n\right)})$
s.t. $x^{\left(n\right)}$ strongly converges to $x$, then it's not
necessary to assume that $(x^{\left(n\right)},(x^{\left(n\right)})^{\ast})$
also lies the domain of $r$. It turns out that we can deduce this
well-definedness of $r(x^{\left(n\right)},(x^{\left(n\right)})^{\ast})$
for sufficiently large $n$.
\begin{thm}
Suppose that $x^{\left(n\right)}$ strongly converges to $x$ and
the tuple $(x,x^{\ast})$ lies in the domain of a rational function
$r\in\mathscr{R}$. Then we have
\begin{enumerate}
\item $(x^{\left(n\right)},(x^{\left(n\right)})^{\ast})$ lies in the domain
of $r$ eventually;
\item the convergence of norms, i.e.,
\[
\lim_{n\rightarrow\infty}\left\Vert r(x^{\left(n\right)},(x^{\left(n\right)})^{\ast})\right\Vert _{_{\mathcal{A}^{\left(n\right)}}}=\left\Vert r(x,x^{\ast})\right\Vert _{_{\mathcal{A}}}.
\]
\end{enumerate}
\end{thm}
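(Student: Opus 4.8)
The plan is to reduce the statement to the polynomial case by an induction on the ``level'' of a rational expression, exploiting the fact that the inverse of an invertible operator is approximated, in norm, by a \emph{single} polynomial that depends neither on the operator nor on the ambient $\text{C}^{\ast}$-algebra --- in particular not on the dimension. Throughout I abbreviate $r(x^{(n)},(x^{(n)})^{\ast})$ to $r(x^{(n)})$ and $r(x,x^{\ast})$ to $r(x)$. Since $(x,x^{\ast})$ lies in the domain of $r\in\mathscr{R}$, some rational expression $\hat{r}\in\Phi^{-1}(r)$, say with $\hat{r}\in\mathfrak{R}_{k}$, has $(x,x^{\ast})\in\text{dom}(\hat{r})$; as $M_{n}(\mathbb{C})$ and every faithful tracial $\text{C}^{\ast}$-probability space are stably finite, the values of $\hat{r}$ at $x^{(n)}$ and $x$ agree with those of $r$ once they exist, so it suffices to prove the theorem for $\hat{r}$ (written again $r$). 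I will prove by induction on $k$ the assertion $S(k)$: \emph{for every strongly convergent $x^{(n)}\to x$ and every $r\in\mathfrak{R}_{k}$ with $(x,x^{\ast})\in\text{dom}(r)$, the tuple $(x^{(n)},(x^{(n)})^{\ast})$ eventually lies in $\text{dom}(r)$, $\|r(x^{(n)})\|\to\|r(x)\|$, and $\tau^{(n)}(r(x^{(n)}))\to\tau(r(x))$}. For $k=0$ this is precisely the definition of strong convergence, polynomials having full domain. I record the consequence used below: $S(k)$ implies that for any finite family $r_{1},\dots,r_{\ell}\in\mathfrak{R}_{k}$ with $(x,x^{\ast})$ in each domain, the tuple $(r_{1}(x^{(n)}),\dots,r_{\ell}(x^{(n)}))$ is eventually well defined and strongly converges to $(r_{1}(x),\dots,r_{\ell}(x))$, since every $\ast$-polynomial in it equals $\hat{q}(x^{(n)})$ for some $\hat{q}\in\mathfrak{R}_{k}$ with $(x,x^{\ast})\in\text{dom}(\hat{q})$, to which $S(k)$ applies.

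The one analytic input I would isolate is a uniform polynomial approximation of the inverse: for each $M\geq1$ and $\varepsilon>0$ there is a scalar non-commutative polynomial $\theta$ in two variables, depending only on $M$ and $\varepsilon$, with $\|c^{-1}-\theta(c,c^{\ast})\|<\varepsilon$ for every element $c$ of every unital $\text{C}^{\ast}$-algebra satisfying $\|c\|\leq M$ and $\|c^{-1}\|\leq M$. This follows from $c^{-1}=(c^{\ast}c)^{-1}c^{\ast}$: the positive element $c^{\ast}c$ has spectrum in $[M^{-2},M^{2}]$, so by the Weierstrass theorem one may choose a one-variable polynomial $g$ with $|g(t)-t^{-1}|<\varepsilon/M$ on $[M^{-2},M^{2}]$ and set $\theta(c,c^{\ast})=g(c^{\ast}c)c^{\ast}$; the estimate is then continuous functional calculus. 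Crucially, $\theta$ is the same for all such $c$ in all dimensions.

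For the inductive step $S(k)\Rightarrow S(k+1)$, write $r$ as a scalar-coefficient non-commutative polynomial $P$ in finitely many $t_{1},\dots,t_{q}\in\mathfrak{R}_{k}$ together with their inverses $t_{1}^{-1},\dots,t_{q}^{-1}$; by $(x,x^{\ast})\in\text{dom}(r)$, each $t_{j}(x)$ is defined and, whenever $t_{j}^{-1}$ occurs in $P$, invertible in $\mathcal{A}$. \emph{Domain.} Apply $S(k)$ to the level-$k$ expression $t_{j}^{\ast}t_{j}$ (where $t_{j}^{\ast}\in\mathfrak{R}_{k}$ is the conjugate expression, with $t_{j}^{\ast}(x)=t_{j}(x)^{\ast}$): the self-adjoint elements $b_{j}^{(n)}:=t_{j}(x^{(n)})^{\ast}t_{j}(x^{(n)})$ satisfy $\|f(b_{j}^{(n)})\|\to\|f(b_{j})\|$ for every continuous $f$ (for polynomial $f$ this is $S(k)$ applied to $f(t_{j}^{\ast}t_{j})$, and the general case follows since $\sup_{n}\|b_{j}^{(n)}\|<\infty$), where $b_{j}:=t_{j}(x)^{\ast}t_{j}(x)$ is positive and invertible. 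Testing with a continuous $f$ that vanishes on $[\min\sigma(b_{j}),\infty)$ and is positive to its left forces $\liminf_{n}\min\sigma(b_{j}^{(n)})\geq\min\sigma(b_{j})>0$, so for large $n$ each $b_{j}^{(n)}$ is invertible with $\|(b_{j}^{(n)})^{-1}\|$ bounded uniformly in $n$; since the algebras are stably finite, a left-invertible element is invertible, hence $t_{j}(x^{(n)})$ is eventually invertible with $\|t_{j}(x^{(n)})^{-1}\|^{2}=\|(b_{j}^{(n)})^{-1}\|$ uniformly bounded. Thus $(x^{(n)},(x^{(n)})^{\ast})$ eventually lies in $\text{dom}(r)$. \emph{Limits.} Fix $\varepsilon>0$, let $M$ bound $\|t_{j}(x)^{\pm1}\|$ and all $\|t_{j}(x^{(n)})^{\pm1}\|$ ($n$ large), take $\theta$ as above, and set $\gamma_{j}:=\theta(t_{j},t_{j}^{\ast})\in\mathfrak{R}_{k}$, so $\|t_{j}(x^{(n)})^{-1}-\gamma_{j}(x^{(n)})\|<\varepsilon$ for large $n$ and likewise at $x$. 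By $S(k)$ and the consequence above, the tuple $y^{(n)}$ formed by the $t_{j}(x^{(n)})$ and the $\gamma_{j}(x^{(n)})$ strongly converges to the corresponding tuple $y$ over $\mathcal{A}$. Substituting $\gamma_{j}$ for $t_{j}^{-1}$ turns $r$ into $\Pi(y^{(n)})$ for a fixed non-commutative polynomial $\Pi$, and a routine telescoping estimate with the uniform norm bounds gives $\|r(x^{(n)})-\Pi(y^{(n)})\|\leq C\varepsilon$ for large $n$ and $\|r(x)-\Pi(y)\|\leq C\varepsilon$, where $C$ depends only on $P$ and $M$. Since $\|\Pi(y^{(n)})\|\to\|\Pi(y)\|$ and $\tau^{(n)}(\Pi(y^{(n)}))\to\tau(\Pi(y))$ by strong convergence of $y^{(n)}$, the limit superior and limit inferior of $\|r(x^{(n)})\|$ and of $\tau^{(n)}(r(x^{(n)}))$ lie within $2C\varepsilon$ of $\|r(x)\|$ and $\tau(r(x))$ respectively; as $\varepsilon>0$ was arbitrary this gives $S(k+1)$, closing the induction and proving the theorem (as well as the trace convergence needed for the main theorem).

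The step I expect to be the genuine obstacle is the domain part of the inductive step: extracting from the convergence of norms and traces of polynomials not only the eventual invertibility of $t_{j}(x^{(n)})$ but a bound on $\|t_{j}(x^{(n)})^{-1}\|$ uniform in $n$, hence in the dimension, and doing so without any normality of $t_{j}(x)$. It is exactly this uniform bound that allows the one polynomial $\theta$ to be deployed simultaneously for all $n$; it is obtained by descending to the positive element $t_{j}^{\ast}t_{j}$ --- for which strong convergence really does control the bottom of the spectrum --- together with the stable finiteness of faithful tracial $\text{C}^{\ast}$-algebras.
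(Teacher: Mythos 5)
Your proof is correct, and the overall strategy --- induction on the level $k$ of the rational expression, with eventual invertibility extracted from a spectral lower bound on the positive element $t_j^{\ast}t_j$ (using stable finiteness to pass from a one-sided to a two-sided inverse), followed by a uniform polynomial approximation of the inverse --- matches the paper's. The point at which you diverge technically is the final approximation step. The paper bundles the sequence into the $\ell^{\infty}$-type $\text{C}^{\ast}$-algebra $\mathcal{M}=\{(a,a^{(N+1)},\dots):\sup\|a^{(n)}\|<\infty\}$, observes that $\hat r(X,X^{\ast})$ lies in $\mathcal{M}$ once the uniform bound $\sup_{n}\|(\hat s^{(n)})^{-1}\|<\infty$ is in hand, and then invokes the abstract fact (Kadison--Ringrose) that an invertible element of a $\text{C}^{\ast}$-algebra remains invertible in any sub-$\text{C}^{\ast}$-algebra containing it, so $\hat r(X,X^{\ast})$ is norm-approximable by a polynomial in $(X,X^{\ast})$; this single polynomial then works simultaneously for all $n>N$ and for the limit. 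You instead make the approximating polynomial entirely explicit: $c^{-1}=(c^{\ast}c)^{-1}c^{\ast}$ together with a Weierstrass approximation $g(t)\approx t^{-1}$ on $[M^{-2},M^{2}]$ yields a $\ast$-polynomial $\theta$ with $\|c^{-1}-\theta(c,c^{\ast})\|<\varepsilon$ for every $c$ with $\|c^{\pm1}\|\le M$ in every $\text{C}^{\ast}$-algebra. Both routes hinge on the same two ingredients (a uniform bound on $\|t_j(x^{(n)})^{-1}\|$ across $n$, and density of polynomials), but yours avoids the auxiliary algebra $\mathcal{M}$ at the cost of being more hands-on, while the paper's is slicker but less constructive. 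One small point of care in your domain argument: the test function $f$ should be chosen to vanish on $[c,\infty)$ for an arbitrary $c<\min\sigma(b_j)$ rather than exactly on $[\min\sigma(b_j),\infty)$, since $\min\sigma(b_j^{(n)})$ could approach $\min\sigma(b_j)$ from below without contradiction to $\|f(b_j^{(n)})\|\to 0$ if $f$ vanishes at $\min\sigma(b_j)$; with that adjustment your $\liminf$ claim goes through. You also fold the trace convergence into the same induction, which the paper treats as a separate corollary by the same $\varepsilon$-argument; that is purely organizational.
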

\begin{proof}
We will prove our main theorem in a recursive way based on the description
of rational expressions in the last section. That is, we want to prove
the above statement by induction on $\mathfrak{R}_{k}$, $k=0,1,2,\cdots$.
For $k=0$, it is the convergence for polynomials, which is just our
assumption. Thus, we suppose that the above two statements hold for
any rational expression $r\in\mathfrak{R}_{k}$ and we are going to
prove them for $\mathfrak{R}_{k+1}$.

First, we need to check the domain problem. Since each rational expression
in $\mathfrak{R}_{k+1}$ can be represented as a finite sum of products
of some rational expressions in $\mathfrak{R}_{k}$ and their inverses,
we only need to prove that, for any $\hat{r}\in\mathfrak{R}_{k}$
with $(x,x^{\ast})$ in the domain of $\hat{r}^{-1}\in\mathfrak{R}_{k+1}$,
$(x^{\left(n\right)},(x^{\left(n\right)})^{\ast})$ lies in the domain
of $\hat{r}^{-1}$ eventually. Or in other words, if $\hat{r}\left(x,x^{\ast}\right)$
is invertible as an operator in $\mathcal{A}$, then $\hat{r}(x^{\left(n\right)},(x^{\left(n\right)})^{\ast})$
is invertible in $\mathcal{A}^{\left(n\right)}$ for sufficiently
large $n$.

For a rational expression, say $\hat{r}$, we always denote $\hat{r}^{\left(\infty\right)}=\hat{r}\left(x,x^{\ast}\right)$,
$\hat{r}^{\left(n\right)}=\hat{r}(x^{\left(n\right)},(x^{\left(n\right)})^{\ast})$.
Because $\hat{r}^{\left(\infty\right)}(\hat{r}^{\left(\infty\right)})^{\ast}$
is positive and invertible, we have
\[
\left\Vert R^{\left(\infty\right)}-\hat{r}^{\left(\infty\right)}(\hat{r}^{\left(\infty\right)})^{\ast}\right\Vert <R^{\left(\infty\right)}
\]
where $R^{\left(\infty\right)}=\parallel\hat{r}^{\left(\infty\right)}(\hat{r}^{\left(\infty\right)})^{\ast}\parallel>0$.
By the assumption, we know
\begin{align}
\left\Vert R^{\left(\infty\right)}-\hat{r}^{\left(\infty\right)}(\hat{r}^{\left(\infty\right)})^{\ast}\right\Vert  & =\left\Vert R^{\left(\infty\right)}-(\hat{r}(\hat{r})^{\ast})^{\left(\infty\right)}\right\Vert \nonumber \\
 & =\lim_{n\rightarrow\infty}\left\Vert R^{\left(\infty\right)}-(\hat{r}(\hat{r})^{\ast})^{\left(n\right)}\right\Vert \label{eq:*}
\end{align}
because $R^{\left(\infty\right)}-\hat{r}(\hat{r})^{\ast}$ is a rational
expression in $\mathfrak{R}_{k}$. Then, denoting $R^{\left(n\right)}=\left\Vert \hat{r}^{\left(n\right)}(\hat{r}^{\left(n\right)})^{\ast}\right\Vert $,
from the inverse triangle inequality 
\[
\left|\left\Vert R^{\left(n\right)}-\hat{r}^{\left(n\right)}(\hat{r}^{\left(n\right)})^{\ast}\right\Vert -\left\Vert R^{\left(\infty\right)}-\hat{r}^{\left(n\right)}(\hat{r}^{\left(n\right)})^{\ast}\right\Vert \right|\leqslant\left|R^{\left(n\right)}-R^{\left(\infty\right)}\right|
\]
and 
\[
R^{\left(\infty\right)}=\lim\limits _{n\rightarrow\infty}R^{\left(n\right)},
\]
it follows that
\[
\lim_{n\rightarrow\infty}\left\Vert R^{\left(n\right)}-\hat{r}^{\left(n\right)}(\hat{r}^{\left(n\right)})^{\ast}\right\Vert =\lim_{n\rightarrow\infty}\left\Vert R^{\left(\infty\right)}-\hat{r}^{\left(n\right)}(\hat{r}^{\left(n\right)})^{\ast}\right\Vert .
\]
Hence, combining with (\ref{eq:*}), we have
\begin{eqnarray*}
\lim_{n\rightarrow\infty}\left(R^{\left(n\right)}-\left\Vert R^{\left(n\right)}-\hat{r}^{\left(n\right)}(\hat{r}^{\left(n\right)})^{\ast}\right\Vert \right) & = & R^{\left(\infty\right)}-\lim_{n\rightarrow\infty}\left\Vert R^{\left(n\right)}-\hat{r}^{\left(n\right)}(\hat{r}^{\left(n\right)})^{\ast}\right\Vert \\
 & = & R^{\left(\infty\right)}-\lim_{n\rightarrow\infty}\left\Vert R^{\left(\infty\right)}-\hat{r}^{\left(n\right)}(\hat{r}^{\left(n\right)})^{\ast}\right\Vert \\
 & = & R^{\left(\infty\right)}-\left\Vert R^{\left(\infty\right)}-\hat{r}^{\left(\infty\right)}(\hat{r}^{\left(\infty\right)})^{\ast}\right\Vert \\
 & > & 0.
\end{eqnarray*}
This implies that
\[
\left\Vert R^{\left(n\right)}-\hat{r}^{\left(n\right)}(\hat{r}^{\left(n\right)})^{\ast}\right\Vert <R^{\left(n\right)}
\]
for $n$ large enough, which is equivalent to say $\hat{r}^{\left(n\right)}(\hat{r}^{\left(n\right)})^{\ast}$
is invertible eventually. Recall that $\left(\mathcal{A}^{\left(n\right)},\tau^{\left(n\right)}\right)$
is stable finite, so we can easily deduce that $\hat{r}^{\left(n\right)}$
is also invertible because it has a right inverse $(\hat{r}^{\left(n\right)})^{\ast}(\hat{r}^{\left(n\right)}(\hat{r}^{\left(n\right)})^{\ast})^{-1}$
when $n$ is large enough.

Moreover, denoting by $\sigma\left(a\right)$ the spectrum of an operator
$a$, we can see that
\begin{eqnarray}
\left\Vert (\hat{r}^{\left(\infty\right)})^{-1}\right\Vert  & = & \sqrt{\left\Vert (\hat{r}^{\left(\infty\right)}(\hat{r}^{\left(\infty\right)})^{\ast})^{-1}\right\Vert }\nonumber \\
 & = & \sqrt{\left(\min\sigma(\hat{r}^{\left(\infty\right)}(\hat{r}^{\left(\infty\right)})^{\ast})\right)^{-1}}\nonumber \\
 & = & \sqrt{\left(R^{\left(\infty\right)}-\left\Vert R^{\left(\infty\right)}-\hat{r}^{\left(\infty\right)}(\hat{r}^{\left(\infty\right)})^{\ast}\right\Vert \right)^{-1}}\nonumber \\
 & = & \lim_{n\rightarrow\infty}\sqrt{\left(R^{\left(n\right)}-\left\Vert R^{\left(n\right)}-\hat{r}^{\left(n\right)}(\hat{r}^{\left(n\right)})^{\ast}\right\Vert \right)^{-1}}\nonumber \\
 & = & \lim_{n\rightarrow\infty}\sqrt{\left(\min\sigma(\hat{r}^{\left(n\right)}(\hat{r}^{\left(n\right)})^{\ast})\right)^{-1}}\nonumber \\
 & = & \lim_{n\rightarrow\infty}\sqrt{\left\Vert \left(\hat{r}^{\left(n\right)}(\hat{r}^{\left(n\right)})^{\ast}\right)^{-1}\right\Vert }\nonumber \\
 & = & \lim_{n\rightarrow\infty}\left\Vert (\hat{r}^{\left(n\right)})^{-1}\right\Vert .\label{eq:**}
\end{eqnarray}

Now, considering a rational expression $\hat{r}\in\mathfrak{R}_{k+1}$
s.t. its domain contains $\left(x,x^{\ast}\right)$, then, by the
above argument, we can see that $(x^{\left(n\right)},(x^{\left(n\right)})^{\ast})$
lies in the domain of $\hat{r}$ eventually. That is, there is $N\in\mathbb{N}$
s.t. $(x^{\left(n\right)},(x^{\left(n\right)})^{\ast})$ is in the
domain of $\hat{r}$ for all $n>N$. Setting 
\[
\mathcal{M}=\left\{ \left(a,a^{\left(N+1\right)},a^{\left(N+2\right)},\cdots\right)\in\mathcal{A}\times\prod_{n>N}\mathcal{A}^{\left(n\right)}|\max\left\{ \left\Vert a\right\Vert ,\sup_{n>N}\left\Vert a^{\left(n\right)}\right\Vert \right\} <\infty\right\} ,
\]
then $\mathcal{M}$ is $\text{C}^{*}$-algebra with the norm
\[
\left\Vert \left(a,a^{\left(N+1\right)},\cdots\right)\right\Vert =\sup\left\{ \left\Vert a\right\Vert ,\sup_{n>N}\left\Vert a^{\left(n\right)}\right\Vert \right\} .
\]
We put
\[
X_{i}=\left(x_{i},x_{i}^{\left(N+1\right)},\cdots\right),
\]
for $1\leqslant i\leqslant m$, then $X_{i}\in\mathcal{M}$. Moreover,
denoting $X=(X_{1},\cdots,X_{m})$, we have $\left(X,X^{\ast}\right)$
lies in the domain of $\hat{r}$ over $\mathcal{M}^{2m}$, namely,
\[
\hat{r}\left(X,X^{\ast}\right)=\left(\hat{r}^{\left(\infty\right)},\hat{r}^{\left(N+1\right)},\cdots\right)
\]
is well defined. Furthermore, we can see $\hat{r}\left(X,X^{\ast}\right)$
is also in $\mathcal{M}$. In fact, recall that $\hat{r}$ can be
written as a finite sum of products consisting of rational expressions
in $\mathfrak{R}_{k}$ and of their inverses, which are all bounded
because of (\ref{eq:**}), i.e., for each $\hat{s}\in\mathfrak{R}_{k}$,
$\left\Vert (\hat{s}^{\left(\infty\right)})^{-1}\right\Vert =\lim\limits _{n\rightarrow\infty}\left\Vert (\hat{s}^{\left(n\right)})^{-1}\right\Vert <\infty$.
It follows that
\[
\max\left\{ \left\Vert \hat{r}^{\left(\infty\right)}\right\Vert ,\sup_{n>N}\left\Vert \hat{r}^{\left(n\right)}\right\Vert \right\} <\infty,
\]
which means that $\hat{r}\left(X,X^{\ast}\right)\in\mathcal{M}$.

Therefore, $\hat{r}\left(X,X^{\ast}\right)$ lies in the sub $\text{C}^{\ast}$-algebra
of $\mathcal{M}$ generated by $\left(X,X^{\ast}\right)$ because
an invertible element is still invertible in any sub $\text{C}^{\ast}$-algebra
containing it (see Proposition 4.1.5 in \cite{KR83}). Thus, for any
$\varepsilon>0$, we can find a polynomial $p$ s.t.
\begin{equation}
\left\Vert p\left(X,X^{\ast}\right)-\hat{r}\left(X,X^{\ast}\right)\right\Vert <\varepsilon.\label{eq: approximating polynomial}
\end{equation}
In particular, we have
\[
\left\Vert p^{\left(\infty\right)}-\hat{r}^{\left(\infty\right)}\right\Vert <\varepsilon
\]
and
\[
\left\Vert p^{\left(n\right)}-\hat{r}^{\left(n\right)}\right\Vert <\varepsilon
\]
for all $n>N$. Hence,
\begin{eqnarray*}
\left|\left\Vert \hat{r}^{\left(n\right)}\right\Vert -\left\Vert \hat{r}^{\left(\infty\right)}\right\Vert \right| & \leqslant & \left|\left\Vert \hat{r}^{\left(n\right)}\right\Vert -\left\Vert p^{\left(n\right)}\right\Vert \right|+\left|\left\Vert p^{\left(n\right)}\right\Vert -\left\Vert p^{\left(\infty\right)}\right\Vert \right|+\left|\left\Vert p^{\left(\infty\right)}\right\Vert -\left\Vert \hat{r}^{\left(\infty\right)}\right\Vert \right|\\
 & \leqslant & \left\Vert \hat{r}^{\left(n\right)}-p^{\left(n\right)}\right\Vert +\left|\left\Vert p^{\left(n\right)}\right\Vert -\left\Vert p^{\left(\infty\right)}\right\Vert \right|+\left\Vert p^{\left(\infty\right)}-\hat{r}^{\left(\infty\right)}\right\Vert \\
 & \leqslant & 2\varepsilon+\left|\left\Vert p^{\left(n\right)}\right\Vert -\left\Vert p^{\left(\infty\right)}\right\Vert \right|
\end{eqnarray*}
for any $n>N$. Combining this with the fact that
\[
\lim_{n\rightarrow\infty}\left\Vert p^{\left(n\right)}\right\Vert =\left\Vert p^{\left(\infty\right)}\right\Vert ,
\]
we have
\[
\limsup_{n\rightarrow\infty}\left|\left\Vert \hat{r}^{\left(n\right)}\right\Vert -\left\Vert \hat{r}^{\left(\infty\right)}\right\Vert \right|<2\varepsilon.
\]
Since $\varepsilon$ is arbitrary, we obtain the result of convergence
of norm.
\end{proof}
An immediate consequence of the theorem is that we also have the convergence
in trace for rational functions.
\begin{cor}
Suppose that $x^{\left(n\right)}$ strongly converges to $x$ and
the tuple $\left(x,x^{\ast}\right)$ lies in the domain of a rational
function $r\in\mathscr{R}$, then we have
\[
\lim_{n\rightarrow\infty}\tau^{\left(n\right)}\left(r(x^{\left(n\right)},(x^{\left(n\right)})^{\ast})\right)=\tau\left(r(x,x^{\ast})\right).
\]
\end{cor}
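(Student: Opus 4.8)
The plan is to deduce this directly from the previous theorem together with the trace part of the strong convergence hypothesis, by running the very same polynomial-approximation argument used at the end of that proof but with the operator norm replaced by the traces $\tau^{\left(n\right)}$ and $\tau$. Throughout I would fix a rational expression $\hat{r}\in\Phi^{-1}(r)$ with $(x,x^{\ast})$ in its domain, and I recall that since all the $\text{C}^{\ast}$-probability spaces involved are stably finite, the evaluations $\hat{r}^{\left(\infty\right)}=\hat{r}(x,x^{\ast})$ and $\hat{r}^{\left(n\right)}=\hat{r}(x^{\left(n\right)},(x^{\left(n\right)})^{\ast})$ depend only on the rational function $r$, so it suffices to prove the convergence for $\hat{r}$.

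First I would invoke the previous theorem to get some $N\in\mathbb{N}$ such that $(x^{\left(n\right)},(x^{\left(n\right)})^{\ast})$ lies in the domain of $\hat{r}$ for all $n>N$, and to reuse the auxiliary $\text{C}^{\ast}$-algebra $\mathcal{M}$ of bounded sequences $\left(a,a^{\left(N+1\right)},a^{\left(N+2\right)},\cdots\right)$ together with the tuple $X=(X_{1},\cdots,X_{m})$, $X_{i}=\left(x_{i},x_{i}^{\left(N+1\right)},\cdots\right)$, for which $\hat{r}(X,X^{\ast})$ lies in the sub $\text{C}^{\ast}$-algebra of $\mathcal{M}$ generated by $(X,X^{\ast})$. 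Hence, exactly as in (\ref{eq: approximating polynomial}), for every $\varepsilon>0$ there is a polynomial $p$ with $\left\Vert p(X,X^{\ast})-\hat{r}(X,X^{\ast})\right\Vert<\varepsilon$, so that $\left\Vert p^{\left(\infty\right)}-\hat{r}^{\left(\infty\right)}\right\Vert<\varepsilon$ and $\left\Vert p^{\left(n\right)}-\hat{r}^{\left(n\right)}\right\Vert<\varepsilon$ for all $n>N$. Then I would estimate, for $n>N$,
\[
\left|\tau^{\left(n\right)}(\hat{r}^{\left(n\right)})-\tau(\hat{r}^{\left(\infty\right)})\right|\leqslant\left|\tau^{\left(n\right)}(\hat{r}^{\left(n\right)}-p^{\left(n\right)})\right|+\left|\tau^{\left(n\right)}(p^{\left(n\right)})-\tau(p^{\left(\infty\right)})\right|+\left|\tau(p^{\left(\infty\right)}-\hat{r}^{\left(\infty\right)})\right|,
\]
bound the first and third terms by $\varepsilon$ using that $\tau^{\left(n\right)}$ and $\tau$ are states (hence contractive in operator norm), and send the middle term to $0$ using the trace convergence for the fixed polynomial $p$ guaranteed by strong convergence. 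This yields $\limsup_{n\rightarrow\infty}\left|\tau^{\left(n\right)}(\hat{r}^{\left(n\right)})-\tau(\hat{r}^{\left(\infty\right)})\right|\leqslant2\varepsilon$, and letting $\varepsilon\rightarrow0$ finishes the proof.

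I do not expect a genuine obstacle here: all the substantive content — that $(x^{\left(n\right)},(x^{\left(n\right)})^{\ast})$ is eventually in the domain, and that $\hat{r}$ is uniformly approximated over $\{\infty\}\cup\{n>N\}$ by a single polynomial — was already established in the proof of the theorem. The only additional fact used is the contractivity of states with respect to the operator norm, which is precisely what lets the uniform polynomial approximation be transferred from the norm statement to the trace statement; so the argument is essentially a verbatim repetition of the last display in the theorem's proof with $\left\Vert\cdot\right\Vert$ replaced by $\tau^{\left(n\right)}(\cdot)$ and $\tau(\cdot)$.
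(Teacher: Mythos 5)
Your proposal is correct and follows essentially the same route as the paper: both proofs reuse the rational expression $\hat{r}$, the sequence algebra $\mathcal{M}$, the tuple $(X,X^{\ast})$, and the approximating polynomial $p$ from the proof of the norm theorem, and both transfer the uniform norm estimate to the trace via the triangle inequality and contractivity of the states. Your write-up in fact fixes a small typo in the paper's displayed estimate (where $\hat{r}^{\left(n\right)}$ appears in place of $\hat{r}^{\left(\infty\right)}$ in the third summand) and makes the role of stable finiteness explicit, but the argument is the same.
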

\begin{proof}
We can see that a similar argument as in the proof of previous theorem
also works for the convergence in trace. Assume that $\hat{r}$ is
a rational expression, $\left(X,X^{\ast}\right)$ and polynomial $p$
are as above s.t. (\ref{eq: approximating polynomial}) holds. Then
\begin{eqnarray*}
 &  & \left|\tau^{\left(n\right)}(\hat{r}^{\left(n\right)})-\tau(\hat{r}^{\left(\infty\right)})\right|\\
 & \leqslant & \left|\tau^{\left(n\right)}(\hat{r}^{\left(n\right)}-p^{\left(n\right)})\right|+\left|\tau^{\left(n\right)}(p^{\left(n\right)})-\tau(p^{\left(\infty\right)})\right|+\tau(p^{\left(\infty\right)}-\hat{r}^{\left(n\right)})\\
 & \leqslant & \left\Vert \hat{r}^{\left(n\right)}-p^{\left(n\right)}\right\Vert +\left|\tau^{\left(n\right)}(p^{\left(n\right)})-\tau(p^{\left(\infty\right)})\right|+\left\Vert \hat{r}^{\left(\infty\right)}-p^{\left(\infty\right)}\right\Vert \\
 & \leqslant & 2\varepsilon+\left|\tau^{\left(n\right)}(p^{\left(n\right)})-\tau(p^{\left(\infty\right)})\right|
\end{eqnarray*}
for $n$ large enough. From the fact that $\lim\limits _{n\rightarrow\infty}\tau^{\left(n\right)}(p^{\left(n\right)})=\tau(p^{\left(\infty\right)})$,
it follows, by letting $\varepsilon$ tend to 0, that
\[
\lim_{n\rightarrow\infty}\tau^{\left(n\right)}(\hat{r}^{\left(n\right)})=\tau(\hat{r}^{\left(\infty\right)}).
\]
\end{proof}
Finally, we give two remarks on possible further investigations.

First, as mentioned in the Introduction, the strong convergence is
stable under taking reduced free products (\cite{Sko15} and \cite{Pis16}),
that is, if $x^{\left(n\right)}$ and $y^{\left(n\right)}$ are $\ast$-free
for each $n\in\mathbb{N}$ and have strong limits $x$ and $y$ respectively,
then $\left(x,y\right)$ is the strong limit of $(x^{\left(n\right)},y^{\left(n\right)})$.
The analogue for weak convergence is also true, that is, the convergence
in distribution is also stable under the reduced free product, namely,
if $x^{\left(n\right)}$ and $y^{\left(n\right)}$ are $\ast$-free
and have $x$ and $y$ as their limits in distribution respectively,
then $\left(x,y\right)$ is the limit of $\left(x^{\left(n\right)},y^{\left(n\right)}\right)$
in distribution. Some similar results for strongly convergent random
matrices are mentioned in the Introduction (\cite{Mal12}, \cite{CM14}
and \cite{BC16}), where we can adjoin two asymptotic free tuples
of random matrices. And the analogue for convergence in distribution,
also holds under certain conditions for random matrices (\cite{HP00}).

Therefore, as we have seen that strong convergence is stable under
taking inverses, it is natural to ask if convergence in distribution
is also stable under taking inverses. So assume that $x^{\left(n\right)}=(x_{1}^{\left(n\right)},\cdots,x_{m}^{\left(n\right)})$
converges in distribution to $x=(x_{1},\cdots,x_{m})$, i.e.,
\[
\lim_{n\rightarrow\infty}\tau^{\left(n\right)}\left(p(x^{\left(n\right)},(x^{\left(n\right)})^{\ast})\right)=\tau\left(p\left(x,x^{\ast}\right)\right)
\]
for any polynomial $p$, the question is whether we can from this
conclude that
\[
\lim_{n\rightarrow\infty}\tau^{\left(n\right)}\left(r(x^{\left(n\right)},(x^{\left(n\right)})^{\ast})\right)=\tau\left(r\left(x,x^{\ast}\right)\right)
\]
for a rational function $r$, under certain assumptions but without
assuming strong convergence. To consider this convergence for random
matrices does make sense because it is well known that some random
matrices converge in distribution but not strongly. For example, a
Wigner matrix $A=(a_{ij})_{i,j=1}^{n}$ whose entries are uniformly
bounded i.i.d. random variable s.t. $\mathbb{E}(a_{11})=\mu>0$, has
its largest eigenvalue asymptotically outside the support of the semi-circular
law (for a reference, see \cite{FK81}).

Unfortunately, it seems that outliers make the convergence in distribution
unstable with respect to inverses. Here is a simple example: let $X^{\left(n\right)}\in M_{n}\left(\mathbb{C}\right)$
be a sequence of matrices that strongly converges to $x$, which lies
in some faithful tracial $\text{C}^{\ast}$-probability space $\left(\mathcal{A},\tau\right)$.
We assume that $x$ is invertible, then by our main theorem, we have
$X^{\left(n\right)}$ is invertible eventually, and
\[
\lim_{n\rightarrow\infty}\text{tr}_{n}\left((X^{\left(n\right)})^{-1}\right)=\tau\left(x^{-1}\right).
\]
Now put
\[
Y^{\left(n+1\right)}=\begin{pmatrix}\frac{1}{n+1} & 0\\
0 & X^{\left(n\right)}
\end{pmatrix}\in M_{n+1}\left(\mathbb{C}\right),
\]
then it is clear that $Y^{\left(n\right)}$ also converges in distribution
to $x$ and $Y^{\left(n\right)}$ is invertible as $X^{\left(n\right)}$
is invertible eventually. However, we can see that 
\[
\lim_{n\rightarrow\infty}\text{tr}_{n}\left((Y^{\left(n\right)})^{-1}\right)=1+\tau\left(x^{-1}\right).
\]

Secondly, if we consider in the one-variable case, a sequence of self-adjoint
random variables $\{x^{\left(n\right)}\}_{n\geqslant1}$ which strongly
converges to a self-adjoint random variable $x$, then for any continuous
function $f$ defined on a neighborhood of the interval $\left[-\left\Vert x\right\Vert ,\left\Vert x\right\Vert \right]$,
we can see that $f(x^{\left(n\right)})$ will be eventually well-defined
since the support of $x^{\left(n\right)}$ is approaching to $\left[-\left\Vert x\right\Vert ,\left\Vert x\right\Vert \right]$.
On the other hand, since we can find some polynomials $\{p_{k}\}$
uniformly converging to $f$ on this neighborhood, we can use the
same argument as above to show that $f(x^{\left(n\right)})$ converges
to $f\left(x\right)$ in trace and in norm. However, for the general
multivariable case, it is not clear whether one can go beyond the
case of rational functions. Nevertheless, it is tempting to hope to
be able to extend our investigation to the case of non-commutative
analytic functions.

\end{document}